\DeclareMathAlphabet{\mathpzc}{OT1}{pzc}{m}{it}
\newtheorem{theorem}{Theorem}[section]
\newtheorem*{theorem*}{Theorem}
\newtheorem{lemma}[theorem]{Lemma}
\newtheorem*{lemma*}{Lemma}
\newtheorem{corollary}[theorem]{Corollary}
\theoremstyle{definition}
\newtheorem{definition}[theorem]{Definition}
\newtheorem{example}[theorem]{Example}
\newtheorem{notation}[theorem]{Notation}
\theoremstyle{remark}
\newtheorem{remark}[theorem]{Remark}
\DeclareMathOperator{\Ext}{Ext}
\DeclareMathOperator{\End}{End}
\DeclareMathOperator{\Hom}{Hom}
\DeclareMathOperator{\Pic}{Pic}
\DeclareMathOperator{\Isom}{Isom}
\DeclareMathOperator{\Aut}{Aut}
\DeclareMathOperator{\coker}{coker}
\DeclareMathOperator{\Tot}{Tot}
\DeclareMathOperator{\ua}{\underline{a}}
\DeclareMathOperator{\ub}{\underline{b}}
\DeclareMathOperator{\uc}{\underline{c}}
\DeclareMathOperator{\ud}{\underline{d}}
\numberwithin{equation}{section}
\begin{document}
\title{Moduli Stacks of Bundles on Local Surfaces}

\author{Oren Ben-Bassat and Elizabeth Gasparim}

\address{Oren Ben-Bassat, Department of Mathematics, University of Haifa, Mount Carmel
 31905 Haifa, Israel}
\email{oren.benbassat@gmail.com}

\address{Elizabeth Gasparim, Imecc - Unicamp
13083-970 Campinas, SP, Brasil}

\address{Elizabeth Gasparim, School of Mathematics, University of Edinburgh, James Clerk Maxwell building,
The King's buildings, Mayfield Road, Edinburgh EH9 3JZ, Scotland}
\email{etgasparim@gmail.com}

\subjclass[2000]{14D20, 14H60, 14J60, 14D23}

\keywords{vector bundles, curves, surfaces, moduli, stacks}

\thanks{We would like to thank Edoardo Ballico, Andrew Kresch and Tony Pantev for helpful conversations we had while working on this project.  We would like to thank the Universities of Haifa and Edinburgh, Marco Andreatta, Fabrizio Catanese, the University of Trento and Fondazione Bruno Kessler,  the Isaac Newton Institute for Mathematical Sciences 2011 VBAC Conference, and the 2011 Mirror Symmetry and Tropical Geometry Conference in Cetraro, Italy for travel support.}

\begin{abstract} We give an explicit groupoid presentation of certain stacks of vector bundles on formal neighborhoods of rational curves inside algebraic surfaces.  The presentation involves a 
M\"obius type action of an automorphism group on a space of extensions.
 \end{abstract}

\maketitle
\tableofcontents
\section{Introduction}

A fundamental question is algebraic geometry is to understand how rational maps on a variety X affect the moduli of vector bundles on X, that is: suppose $X$ and $Y$
birationally equivalent, then what is the relation between the various 
moduli of vector
bundles on $X$ and $Y$?
 Here we focus on  the case of 
surfaces, in which case
 rational maps are obtained by blowing up (possibly singular)
points. Suppose $\pi\colon
Y \rightarrow X$ is the blow up of a point $x$ in $X$, with 
$\ell= \pi^{-1}(x)$.
Considering pullbacks, one can then study relative situation of the moduli of vector bundles on $X$ mapping into the moduli of vector bundles on $Y$. Since 
$\pi$ is an isomorphism outside $\ell$ clearly the heart of the question
lies in the geometry of moduli of bundles on a small neighborhood of $\ell$. 
This question was addressed from the point of view of moduli spaces of equivalence classes of vector bundles
in \cite{ADV} for the case when $x$ is a smooth point, and the geometry of 
the local moduli was used to prove the Atiyah--Jones conjecture for rational 
surfaces. In this paper we consider the moduli {\it stacks} of vector bundles in formal neighborhoods of $\ell$, and give explicit groupoid presentations of such moduli stacks. The stacky point of view, besides clarifying several delicate issues about the local moduli also has the advantage that it generalises to the case of singular surfaces, where $\ell$ is a line with self-intersection $\ell^2 = -k <-1.$  We develop the study of stacks of bundles on (completions of the) local surfaces
$Z_k=\Tot({\mathcal O}(-k))$ and give presentations of certain stacks of rank 2 bundles over these surfaces.  The most interesting aspect of these presentations is the "M\"obius" transformation (\ref{nobulletmobius}) discussed in \ref{Mob}.
\section{Local surfaces and vector bundles on them}

\begin{notation}  In this paper we will work with (associative, commutative, unital) $\mathbb{C}$-algebras.  Therefore, affine scheme will mean the spectrum of such an algebra, and all varieties, schemes, and formal schemes are considered over $\mathbb{C}$.  We will  work over the site of affine schemes or $\mathbb{C}$-algebras with the faithfully flat topology.  The schemes we will consider are quasi-compact and quasi-separated.   
%Up until the section on topology one could replace $\mathbb{C}$ with any algebraically closed field of characteristic zero.
For any {\it positive} integer $k$, we have the algebraic variety \[Z_k = \Tot(\mathcal O_{\mathbb P^1}(-k))=Spec_{\mathbb{P}^{1}} \left(\bigoplus_{i=0}^{\infty} \mathcal{O}_{\mathbb{P}^{1}}(ik)\right) \] and $\ell \cong \mathbb{P}^{1}$ its zero section,
so that $\ell^2 = -k$. Let $I_{\ell}$ be the sheaf of $\mathcal{O}_{Z_{k}}$ ideals defining $\ell$.  We write
$Z_{k}^{(n)}$
for the $n^\mathrm{th}$ infinitesimal neighborhood of $\ell$
 and
$\widetilde{Z_{k}}= Z_{k}^{(\infty)}$ for the ``formal'' neighborhood of $\ell$ in $Z_k$. To avoid any confusion, we should mention that $\widetilde{Z_{k}}= Z^{(\infty)}_k$ is a standard scheme and not a formal one, even though we sometimes call it the formal neighborhood of $\ell$.   It is a (an inductive or direct) limit in the category of schemes of the directed system $Z_{k}^{(\bullet)}$:
\[
\mathbb{P}^{1} = Z_{k}^{(0)}\to Z_{k}^{(1)} \to Z_{k}^{(2)} \to \cdots.
\]  The scheme $\widetilde{Z_{k}}= Z_{k}^{(\infty)}$ is defined as the relative spectrum over $Z_{k}$ of the quasicoherent sheaf of  $\mathcal{O}_{Z_{k}}$ algebras
\[\text{lim}_{n} \mathcal{O}_{Z_{k}}/I_{\ell}^{n} =\prod_{i=0}^{\infty} \mathcal{O}_{Z_{k}}(ik)
.\]
Let $\widehat{Z_{k}}= (\ell, \text{lim}_{n} \mathcal{O}_{Z_{k}}/I_{\ell}^{n})$ denote the formal scheme given as the formal completion of $Z_{k}$ along $\ell$.    It is a (an inductive or direct) limit in the category of ringed spaces over $\mathbb{P}^{1}$.  The formal scheme $\widehat{Z_{k}}$ can also be regarded as the formal completion of $\widetilde{Z_{k}}$ along $\ell$.  On the other hand  $\widetilde{Z_{k}}$ can be recovered from $\widehat{Z_{k}}$ as the relative spectrum over $\ell$ of the quasicoherent sheaf of  $\mathcal{O}_{\ell}$ algebras
\[\text{lim}_{n} \mathcal{O}_{\widehat{Z_{k}}}/I_{\ell}^{n} =\prod_{i=0}^{\infty} \mathcal{O}_{\ell}(ik)
.\]
There is a presentation
\[Z_k  = \left(U \bigsqcup V\right)/\sim,\]  where we will always use the charts
$U =   {\mathbb C}^2$ with coordinates $(z,u)$, and
$V =   {\mathbb C}^2$  with coordinates $(\xi,v)$, with
$U \cap V = ({\mathbb  C} - \{0\})  \times   {\mathbb C}$
where the equivalence relation $\sim$ is given by the change of coordinates
$ (\xi,v) = (z^{-1 },z^ku)$.
 Note that the zero section $\ell$
is given in these coordinates  by $u=0$ in the $U$-chart
and $v=0$ in the $V$-chart.  It is easy to see that $I_{\ell} \cong \mathcal{O}(k)$.  In fact, $I_{\ell}$ is the line bundle associated to the divisor $-\ell$ and since $u=\xi^{k}v$,
\[div(u) = \ell + k f
\]
where $f$ is the fiber defined by $\xi=0$.
We similarly have \[U^{(n)} = Spec(\mathbb{C}[z,u]/(u^{n+1}))\] and \[V^{(n)} = Spec(\mathbb{C}[\xi,v]/(v^{n+1}))\] and the associated "limits" \[\widetilde{U}= U^{(\infty)}=Spec(\mathbb{C}[z][[u]])\] and  \[\widetilde{V}= V^{(\infty)}=Spec(\mathbb{C}[\xi][[v]]).\]  As above, we have
\[Z^{(n)}_k  = \left(U^{(n)} \bigsqcup V^{(n)}\right)/\sim,\]
\[\widetilde{Z_k}  = \left(\widetilde{U} \bigsqcup \widetilde{V}\right)/\sim.
\]
and
\[\widehat{Z_k}  = \left(\widehat{U} \bigsqcup \widehat{V}\right)/\sim
\]
where $\widehat{U}$ and $\widehat{V}$ are the formal scheme completions of $U$ and $V$ along $\ell$.
\begin{remark}Unless we explicitly state that $n$ is finite, in each usage of the spaces $Z_{k}^{(n)}$ we are including the case that $n=\infty$.
\end{remark}
These presentations are helpful for describing vector bundles.  For instance by the answer to Serre's famous question (proved by Seshadri  \cite{Se} 
and in further generality by Quillen \cite{Qi} and Suslin \cite{Su}), $U = Spec(\mathbb{C}[z,u])$ has no non-trivial vector bundles; similarly this is true for $U^{(n)}$ and $\widehat{U}$ by Theorem 7 of \cite{Co}.
All the schemes we have mentioned up until now are Noetherian and $\widehat{Z_{k}}$ is a Noetherian formal scheme.  If $T$ is any affine scheme (Noetherian or not) then there is an equivalence of groupoids of vector bundles
\[\text{Vect}(\widetilde{Z_{k}} \times T) \cong \text{Vect}(\widehat{Z_{k}} \times T)
\]
which preserves the tensor structure and takes $\mathcal{O}_{\widetilde{Z_{k}}}$ to $\mathcal{O}_{\widehat{Z_{k}}}$.    In the case that $T$ is a point this just follows from a simple application of Theorem 9.7 from II.9 of \cite{HA}.  Also notice that $\mathcal{O}(\widetilde{Z_{k}})=\mathcal{O}(\widehat{Z_{k}})$.  
If $T$ is an affine scheme such that $\text{Pic}(T)$ is trivial then
\[\Pic(\widehat{Z_k} \times T)\simeq \Pic(\widetilde{Z_k} \times T) \simeq
\Pic(Z^{(n)}_k \times T) \simeq \Pic({\mathbb P}^1 \times T) \simeq \Pic(\mathbb{P}^{1})\simeq \mathbb{Z};
\] we will use the
symbol ${\mathcal O}(j)$ for the line bundle with first Chern class $j$
coming from $\mathbb{P}^{1}$ in any of these spaces.   If $E$ is a rank $2$ vector bundle of first Chern class zero on $Z^{(n)}_k$ then the splitting type $j\geq 0$ of $E$ is the integer such that the restriction of $E$ to $\ell $ is isomorphic to $\mathcal{O}(j) \oplus \mathcal{O}(-j)$.  For a vector bundle on $Z^{(n)}_k \times T$ we say that it has constant splitting type $j$ if its splitting type is $j$ over every $t \in T(\mathbb{C})$.
\end{notation}

For our explicit presentations of stacks, we will need the following basic
results about rank 2 bundles on $Z^{(n)}_k$.
\begin{lemma}\label{reps}
Let $S$ be any scheme over $\mathbb{C}$ and $E$ a rank $2$ vector bundle on $Z^{(n)}_k \times S$ of constant splitting type $j \geq 0$.   Then for any $s \in S(\mathbb{C})$ there is an open subscheme $T$ of $S$ containing $s$ and such that the restriction of $E$ to $Z^{(n)}_k \times T$ has the structure of an extension
\[0 \rightarrow {\mathcal O}(-j) \rightarrow E|_{Z^{(n)}_k \times T}
\rightarrow \mathcal{O}(j) \rightarrow 0.
\]
\end{lemma}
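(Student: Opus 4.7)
After twisting by $\pi_{Z_k^{(n)}}^{*}\mathcal{O}(j)$ the problem is equivalent to producing a nowhere vanishing global section $\sigma$ of $\mathcal{E}\ce E\otimes\pi_{Z_k^{(n)}}^{*}\mathcal{O}(j)$ over $Z_k^{(n)}\times T$ for some Zariski open $T\ni s$, plus identifying the cokernel of the resulting $\mathcal{O}(-j)\hookrightarrow E$ with $\mathcal{O}(j)$. I would proceed in three steps: build such a section on the central fiber $Z_k^{(n)}\times\{s\}$, spread it over a neighborhood of $s$ by cohomology and base change, and then read off the cokernel from the Picard computation recorded in the notation section.

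\emph{Step 1 (central fiber).} By hypothesis $\mathcal{E}|_{\ell\times\{s\}}\cong\mathcal{O}(2j)\oplus\mathcal{O}$, which has the nowhere vanishing section $\sigma_s^{\ell}$ given by the inclusion of the $\mathcal{O}$ summand. Now $Z_k^{(n)}$ is built from $\ell$ by successive square zero extensions whose ideals are $I_{\ell}^{m}/I_{\ell}^{m+1}\cong\mathcal{O}_{\ell}(mk)$ (using $I_{\ell}\cong\mathcal{O}_{Z_{k}}(k)$ from the notation). The obstruction to lifting a section of $\mathcal{E}$ across the $m$th stage lies in
\[
H^{1}\bigl(\ell,\mathcal{E}|_{\ell}\otimes\mathcal{O}(mk)\bigr)
 = H^{1}(\mathcal{O}(2j+mk))\oplus H^{1}(\mathcal{O}(mk)),
\]
which vanishes for $j\geq 0$, $k\geq 1$, $m\geq 1$. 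Thus $\sigma_{s}^{\ell}$ lifts stage by stage to a section $\sigma_{s}$ of $\mathcal{E}$ on $Z_{k}^{(n)}\times\{s\}$ for every finite $n$; Mittag-Leffler (the fibers of each lifting problem are nonempty affine spaces over the $H^{0}$'s) yields a compatible inverse limit section when $n=\infty$.

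\emph{Step 2 (spread out).} Via the equivalence $\text{Vect}(\widetilde{Z_{k}}\times T)\cong\text{Vect}(\widehat{Z_{k}}\times T)$ recorded in the notation, I replace $\widetilde{Z_{k}}$ by the formal scheme $\widehat{Z_{k}}$, whose underlying topological space is the proper curve $\ell$. The projection $q\colon\widehat{Z_{k}}\times S\to S$ is thereby topologically proper, and coherent cohomology on the fibers can be computed as the inverse limit of cohomologies on the proper thickenings $Z_{k}^{(n)}\times\{t\}$ via the theorem on formal functions. Running the filtration calculation of Step 1 at every fiber, the constant splitting type hypothesis makes $h^{0}(\mathcal{E}|_{t})$ and $h^{1}(\mathcal{E}|_{t})$ independent of $t$. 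Cohomology and base change then shows $q_{*}\mathcal{E}$ is locally free near $s$ and the fiber map at $s$ is an isomorphism, so $\sigma_{s}$ is the value at $s$ of a section $\sigma\in\Gamma(Z_{k}^{(n)}\times T,\mathcal{E})$ for some Zariski open $T\ni s$.

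\emph{Step 3 (conclusion).} Non-vanishing of $\sigma$ on $\ell\times\{s\}$ forces non-vanishing on all of $Z_{k}^{(n)}\times\{s\}$, because the nilpotent thickening shares the underlying topological space of $\ell$. The vanishing locus of $\sigma$ is closed and its image under the topologically proper projection $\ell\times T\to T$ is a closed subset of $T$ not containing $s$; shrinking $T$ to its complement makes $\sigma$ a subbundle. The cokernel $F$ of $\mathcal{O}(-j)\hookrightarrow E|_{Z_{k}^{(n)}\times T}$ is a line bundle with $F|_{t}\cong\mathcal{O}(j)$ on every fiber, and after a further Zariski shrinking of $T$ so that $\Pic(T)$ is trivial the identification $\Pic(Z_{k}^{(n)}\times T)\cong\mathbb{Z}$ from the notation section forces $F\cong\mathcal{O}(j)$. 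The primary technical difficulty is Step 2 in the case $n=\infty$: since $\widetilde{Z_{k}}$ is not proper over $\mathbb{C}$ the usual cohomology and base change theorem does not apply directly, and the clean workaround is to transfer to the formal completion $\widehat{Z_{k}}$ where the base is the proper curve $\ell$ and the inverse limit over finite $n$ can be controlled.
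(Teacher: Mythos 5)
Your overall strategy --- produce a nowhere vanishing section of $E\otimes\mathcal{O}(j)$ on the central fiber, spread it to a Zariski neighborhood of $s$, and identify the cokernel of the resulting $\mathcal{O}(-j)\hookrightarrow E$ by the see-saw/Picard argument --- is the same as the paper's, and your Steps 1 and 3 are sound. Your Step 1 (order-by-order lifting along the square-zero extensions, with obstructions in $H^{1}(\ell,\mathcal{O}(2j+mk))\oplus H^{1}(\ell,\mathcal{O}(mk))=0$) is precisely the ``alternate approach'' the paper records in the Remark following the lemma; the paper's own proof instead simply cites Theorem 3.3 of [G1] for the extension structure on $Z_{k}^{(n)}\times\{s\}$. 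Either is fine.

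The genuine divergence is in Step 2, and it is where your route has a gap. You push forward $E\otimes\mathcal{O}(j)$ from all of $Z_{k}^{(n)}\times S$ and invoke cohomology and base change. For finite $n$ this works: the fibers are proper, and (granting the fiberwise extension structure) $h^{0}$ is constant and $h^{1}=0$ because $H^{1}(Z_{k}^{(n)},\mathcal{O}(a))=0$ for $a\geq 0$. But for $n=\infty$ the fibers of $\widetilde{Z_{k}}\times S\to S$ are not proper, and your proposed fix --- pass to $\widehat{Z_{k}}$ and ``control the inverse limit'' --- is asserted rather than proved: the theorem on formal functions compares completed cohomology along a fiber of a \emph{proper} morphism, and semicontinuity/base change for the formal scheme $\widehat{Z_{k}}\times S\to S$ would itself require an argument (exactness of the relevant inverse limits, Mittag--Leffler for the $H^{0}$'s uniformly in $t$, etc.). The paper sidesteps this entirely by pushing forward only the restriction $E|_{\ell\times S}\otimes\mathcal{O}(j)$ along $\ell\times S\to S$, which is proper for every $n$ including $n=\infty$; one extends the fiber section to a section over $\ell\times T'$ not vanishing there, and then extends that section order by order to $Z_{k}^{(n)}\times T'$ using the vanishing of $H^{1}$ against the graded pieces $I_{\ell}^{m}/I_{\ell}^{m+1}$ over the affine base, with non-vanishing detected on the reduced space $\ell\times T'$. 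If you replace your base-change step by this restriction-to-$\ell$ device, your proof closes up uniformly in $n$.
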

\begin{proof}
By \cite{CA1} Theorem 3.3. $E|_{Z^{(n)}_k \times \{s\}}$ can be written  as an algebraic extension
$$0 \rightarrow {\mathcal O}(-j) \rightarrow E|_{Z^{(n)}_k \times \{s\}}
\rightarrow {\mathcal O}(j) \rightarrow 0$$  where $j >0$.  Consider the leftmost injective map as a nowhere vanishing element of the space of global sections $H^{0}(Z_{k}^{(n)} \times \{s\}, E|_{Z_{k}^{(n)} \times \{s\}} \otimes \mathcal{O}(j))$.  The pushforward $\pi_{S*}(E|_{\ell \times S}\otimes \mathcal{O}(j))$ is a vector bundle on $S$ and we have chosen a non-zero point in the fiber over $s$.  Choose $T'$ open in $S$ and containing $s$ and an extension of the above section to an element of   \[H^{0}(T',(\pi_{S*}(E|_{\ell \times S}\otimes \mathcal{O}(j)))|_{T'})= H^{0}(\ell \times T', E|_{ \ell \times T'} \otimes \mathcal{O}(j))
\] such that this chosen global extension  does not vanish on $\ell \times T'$ and hence does not vanish on $Z_{k}^{(n)} \times T'$, and passes through our chosen element of the fiber over $s$.  This gives us an injective map of constant rank leading to a short exact sequence on $Z_{k}^{(n)} \times T'$
\[0 \to \mathcal{O}(-j) \to E_{Z_{k}^{(n)} \times T'} \to L \to 0
\]
where $L$ is a line bundle on $Z_{k}^{(n)} \times T'$ isomorphic to $\mathcal{O}(j)$ over every geometric point of $T'$.  By the see-saw principle there is a $T$ open in $T'$ and containing $S$ such that the restriction of $L$ to $Z_{k}^{(n)} \times T'$ is isomorphic to $\mathcal{O}(j)$.  Therefore the restriction of the above short exact sequence to $Z_{k}^{(n)} \times T$ gives the desired result.
\end{proof}

\begin{remark}
An alternate approach to the above two lemmas is to start with any vector bundle which has nowhere zero map of $\mathcal{O}(-j)$ to $E$ over $\ell \times T$ for some affine scheme $T$ and use the fact that $H^{1}(\ell \times T, I^{m}_{\ell \times T})=0$ for $m>0$ to extend this map order by order to a map over $Z_{k}^{(n)} \times T$ which must be nowhere zero.
\end{remark}
 \begin{lemma}\label{format}
 Let $T$ be an affine scheme and $E$ an algebraic extension of $\mathcal{O}_{Z^{(n)}_k \times T}$ modules
$$0 \rightarrow {\mathcal O}(-j) \rightarrow E
\rightarrow {\mathcal O}(j) \rightarrow 0,$$  over $Z^{(n)}_k \times T$  which splits over $\ell \times T$
for $j \geq 0$ then, in the chosen coordinates
  $E$  can be described by a transition  matrix
of the form
$$\left(\begin{matrix}  z^j & p \cr 0 &  z^{-j} \cr \end{matrix}\right)$$
on $(U^{(n)} \cap V^{(n)}) \times T,$  where

\begin{equation}
\label{pform}
p = \sum_{i = 1}^{min(\lfloor (2j-2)/k\rfloor,n-1)} \sum_{l = ki-j+1}^{j-1}p_{i,l}z^lu^i.
\end{equation}
and $p_{i.l} \in \mathcal{O}(T)$.
\end{lemma}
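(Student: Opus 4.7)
The plan is to exploit the fact that the extension already splits on each of the affine charts $U^{(n)} \times T$ and $V^{(n)} \times T$, and then to normalize the resulting off-diagonal entry of the transition matrix by modifying local splittings.

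First, since $T$ is affine, so are $U^{(n)} \times T$ and $V^{(n)} \times T$, and the line bundles $\mathcal{O}(\pm j)$ are trivial on each chart. Hence $\Ext^{1}(\mathcal{O}(j),\mathcal{O}(-j))=H^{1}(\mathcal{O}(-2j))=0$ on each chart, so the extension splits there. Choosing local splittings compatible with the standard trivializations of $\mathcal{O}(\pm j)$ on $U^{(n)}$ and $V^{(n)}$ produces a transition matrix of the claimed upper-triangular shape
\[
\Phi = \begin{pmatrix} z^{j} & p \\ 0 & z^{-j} \end{pmatrix}, \qquad p\in\mathcal{O}(T)[z,z^{-1},u]/(u^{n+1}).
\]

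Next I would work out the gauge freedom. Two local splittings on $U^{(n)}\times T$ differ by an element $a\in\mathcal{O}(T)[z,u]/(u^{n+1})$, viewed as a section of $\SHom(\mathcal{O}(j),\mathcal{O}(-j))$ in the chosen trivialization; similarly by $b\in\mathcal{O}(T)[\xi,v]/(v^{n+1})$ on $V^{(n)}\times T$. Applying these via $M_V \Phi M_U^{-1}$ with upper-triangular unipotent $M_U$ and $M_V$ computes out to the equivalence $p \sim p - az^{j} + bz^{-j}$, where $b$ is expressed in $U$-coordinates using $\xi = z^{-1}$ and $v = z^{k} u$.

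Finally, the bookkeeping: expand $p = \sum_{i,l} p_{i,l}\, z^{l}u^{i}$. The subtraction $az^{j}$ kills any monomial with $l \geq j$, while $bz^{-j} = \sum_{i} b_{i}(z^{-1})\, z^{ki-j}u^{i}$ kills any monomial with $l \leq ki-j$. The surviving monomials are exactly those with $ki - j + 1 \leq l \leq j - 1$, a range that is nonempty only when $i \leq \lfloor (2j-2)/k \rfloor$. The hypothesis that $E$ splits over $\ell\times T$ forces the class of $p|_{u=0}$ in $H^{1}(\ell\times T, \mathcal{O}(-2j))\cong H^{1}(\mathbb{P}^{1},\mathcal{O}(-2j))\otimes \mathcal{O}(T)$ to vanish; after the above normalization the residual $u^{0}$-terms form a basis of representatives for this group, so they must be zero, eliminating the $i=0$ contribution and yielding the stated normal form. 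The main subtlety, and the source of the asymmetric lower bound $l \geq ki - j + 1$, is the substitution $v = z^{k} u$ used to express $V$-chart data in $U$-coordinates: this is precisely what couples the Laurent bound on $l$ to the infinitesimal order $i$ and to the self-intersection $-k$ of $\ell$.
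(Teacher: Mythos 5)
Your argument is correct and is essentially the \v{C}ech computation that the paper delegates to \cite[Theorem 3.3]{CA1} together with the base-change identity (\ref{LerayPres}): splitting the extension on the two affine charts, identifying the gauge freedom $p \mapsto p - az^{j} + bz^{-j}$ with $a$ regular on $U^{(n)}\times T$ and $b$ regular on $V^{(n)}\times T$, and using the splitting over $\ell\times T$ to kill the $i=0$ terms is exactly how the stated normal form is obtained. (The only discrepancy --- $u^{n+1}$ versus $u^{n}$ in the truncation, hence $i\le n$ versus $i\le n-1$ --- is an off-by-one already present in the paper between the definition of $U^{(n)}$ and the displayed $\Ext^{1}$ group, so it is not a gap in your argument.)
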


\begin{proof}
This follows immediately from a \v{C}ech cohomology calculation of 
\[\Ext^{1}_{Z_{k}^{(n)}}(\mathcal{O}(j),\mathcal{O}(-j)) = \frac{\mathbb{C}[z,z^{-1},u]/(u^{n})}{z^{-j}\mathbb{C}[z^{-1},z^{k}u]/((z^{k}u)^{n})+z^{j}\mathbb{C}[z,u]/(u^{n})}
\]
(performed in Theorem 3.3 of \cite{CA1}) along with equation (\ref{LerayPres}).
\end{proof}

\begin{corollary} As a consequence of the above two Lemmas \ref{format} and \ref{reps}, we see that any rank $2$ vector bundle on $Z_{k}^{(n)} \times T$ (or $\widehat{Z_{k}} \times T$) takes a special form locally on $T$ and in this form it is clearly the restriction (completion) of a vector bundle on $Z_{k}$. The theorem on formal functions implies then that
\[\widehat{\Ext^{i}_{Z_{k} \times T}(V,W)} \cong \Ext^{i}_{\widehat{Z_{k}} \times T}(V,W)
.\]
\end{corollary}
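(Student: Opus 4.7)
The plan is to chain Lemmas \ref{reps} and \ref{format} to reduce every rank $2$ bundle on $Z_k^{(n)} \times T$ (or $\widehat{Z_k} \times T$) to the restriction of an algebraic bundle on $Z_k \times T$, and then to invoke the theorem on formal functions for the Ext comparison.

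For the first assertion, fix a rank $2$ bundle $E$ on $Z_k^{(n)} \times T$ (or on $\widehat{Z_k} \times T$) of constant splitting type $j \geq 0$. After shrinking $T$ to an open neighborhood of any chosen point, Lemma \ref{reps} writes $E$ as an extension $0 \to \mathcal{O}(-j) \to E \to \mathcal{O}(j) \to 0$, and Lemma \ref{format} expresses it by a transition matrix whose single off-diagonal entry $p$ has the form (\ref{pform}). The key observation is that the outer sum in (\ref{pform}) is indexed by $1 \leq i \leq \min(\lfloor (2j-2)/k \rfloor, n-1)$, which is bounded by the finite quantity $\lfloor (2j-2)/k \rfloor$ irrespective of $n$ (including $n = \infty$). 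Hence $p$ is a genuine polynomial in $u$ with Laurent polynomial coefficients in $z$, and the same transition matrix already defines an algebraic rank $2$ bundle $\widetilde{E}$ on all of $Z_k \times T$ whose restriction to $Z_k^{(n)} \times T$ (respectively $I_\ell$-adic completion to $\widehat{Z_k} \times T$) recovers $E$.

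For the Ext isomorphism, let $V, W$ be coherent sheaves on $Z_k \times T$ (for instance, the algebraic bundles just produced). Both sides of the claimed isomorphism are computed by \v{C}ech complexes relative to the affine cover of $Z_k \times T$ by the two standard charts (base-changed to $T$), respectively their formal analogues on $\widehat{Z_k} \times T$. The \v{C}ech complex on the formal side is obtained from that on the algebraic side by $I_\ell$-adic completion term-by-term. Since the \v{C}ech modules involved are finitely generated over the Noetherian rings of the charts, $I_\ell$-adic completion is exact on them and therefore commutes with the \v{C}ech differentials; passing completion through cohomology produces the stated isomorphism. This is precisely the incarnation of the theorem on formal functions in our setting.

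The main obstacle is mild but worth acknowledging: $Z_k \times T \to T$ is not proper, so formal functions does not apply in its most familiar form. This obstruction is absorbed by the polynomial structure established in the first step above, which guarantees finite generation of the relevant \v{C}ech modules over Noetherian rings and hence exactness of completion on them. With this in hand, the comparison of cohomologies is routine.
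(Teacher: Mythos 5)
Your first step is sound and is exactly what the paper intends: the outer index in (\ref{pform}) is capped by $\lfloor (2j-2)/k\rfloor$ independently of $n$, so the off-diagonal entry $p$ is a genuine polynomial in $u$ with Laurent-polynomial coefficients in $z$, and the same transition matrix defines an algebraic bundle on all of $Z_k \times T$ restricting (completing) to the given one. (You quietly add the hypotheses of constant splitting type and of splitting over $\ell\times T$ needed to invoke Lemmas \ref{reps} and \ref{format}; the paper glosses over the same point.) The paper gives no argument for the displayed Ext isomorphism beyond citing the theorem on formal functions, so the substance of the comparison lies in your second step, and there the justification does not go through as written.

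The gap is in the claim that exactness of $I_\ell$-adic completion on the \v{C}ech terms lets you pass completion through the cohomology of the complex. The three terms are finitely generated over three \emph{different} Noetherian rings, namely $\mathcal{O}(T)[z,u]$, $\mathcal{O}(T)[\xi,v]$ and $\mathcal{O}(T)[z,z^{-1},u]$; the differential is not a morphism of finitely generated modules over any single one of them, and the relevant subquotients are not finitely generated over the ring of the term in which they live (the image of the differential is not an $\mathcal{O}(T)[z,z^{-1},u]$-submodule of the degree-one term, and that term is not finite over $\mathcal{O}(Z_k\times T)$). Consequently Artin--Rees and flat base change are unavailable, and ``completion commutes with cohomology'' is precisely the content of the theorem you are trying to reprove, not a routine consequence of termwise exactness. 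A concrete symptom: $\bigl(\mathcal{O}(T)[z][[u]]\bigr)_z \subsetneq \mathcal{O}(T)[z,z^{-1}][[u]]$, so the completed overlap ring is not obtained from the completed chart ring by localization, and the naive flatness argument dies exactly there. The non-properness of $Z_k\times T \to T$ that you flag is not absorbed by the polynomiality of $p$; the correct fix is that the contraction $\pi\colon Z_k\times T \to \Spec\mathcal{O}(Z_k)\times T$ of $\ell\times T$ to the cone point \emph{is} proper with affine Noetherian target, so $R^i\pi_*\SHom(V,W)$ is coherent, and the theorem on formal functions (together with Grothendieck's comparison of $\varprojlim_n H^i(Z_k^{(n)}\times T,\,\cdot\,)$ with $H^i(\widehat{Z_k}\times T,\,\cdot\,)$) applied to $\pi$ and completion along $\pi^{-1}(\{0\}\times T)=\ell\times T$ yields the displayed isomorphism. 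The finite generation that matters is that of the cohomology over $\mathcal{O}(Z_k)\otimes\mathcal{O}(T)$, not that of the \v{C}ech terms over the chart rings.
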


\ \hfill $\Box$

\begin{notation}

Let
\[N_{j,k}^{(n)}= \{ (i,l)| ki-j+1 \leq l \leq j-1\phantom{x}\text{and}\phantom{x} 1 \leq i \leq min(\lfloor (2j-2)/k\rfloor,n-1)\}.\]
Consider the algebraic variety over $\mathbb{C}$
\begin{equation}
W_{j,k}^{(n)} =  Spec \left(\mathbb{C}[\phantom{x}p_{i,l}\phantom{x}| \phantom{x} (i,l) \in N_{j,k}^{(n)}] \right)
.\end{equation}
For any fixed $j,k$ it remains finite dimensional even for $n=\infty$.  If we pass to the $\mathbb{C}$ points then 
we get
\[W^{(n)}_{j,k}(\mathbb{C}) =  \{p \in \Ext_{Z_{k}^{(n)}}^1({\mathcal O}(j) , \mathcal O(-j))\phantom{x}|\phantom{x}
p \vert_\ell = 0\}.\]

Let
\begin{equation}\label{Rjeqn}
R^{(n)}_{j,k} = \bigoplus_{i=1}^{\lfloor (2j-2)/k\rfloor}  \bigoplus_{l = ki-j+1}^{j-1} \mathbb{C} z^{l} u^{i} \subset \mathcal{O}(U^{(n)} \cap V^{(n)}).
\end{equation}
of course $R^{(n)}_{j,k}$ is the set of $\mathbb{C}$ points of  $W_{j,k}^{(n)}$ but we distinguish them because of the different notions of automorphisms of $R^{(n)}_{j,k}$ and $W_{j,k}^{(n)}$.
\end{notation}

\begin{remark}
Note that in our chosen form of transition matrix  from the above theorem we have explicitly chosen
$p\in R_{j,k}^{(n)}$.
\end{remark}
\begin{definition}\label{bigbundle}
Consider the open cover
$\{ U^{(n)} \times W_{j,k}^{(n)} , V^{(n)} \times W_{j,k}^{(n)} \}$ of
$Z^{(n)}_k \times W_{j,k}^{(n)}$.  We define $\mathbb{E}$, sometimes called the big bundle, to be the bundle 
$$\begin{array}{c}
 \mathbb{E} \\
\downarrow  \\
Z^{(n)}_k\times W_{j,k}^{(n)}
\end{array}$$
on $Z_{k} \times W_{j,k}^{(n)}$ defined by
transition matrix
$$\left(\begin{matrix}
 z^j & p \cr 0 &  z^{-j} \cr \end{matrix}\right) \in H^{0}((U^{(n)} \cap V^{(n)})
\times W^{(n)}_{j,k},\mathcal{A}ut(\mathcal{O}^{\oplus 2})).$$
Let $T$ be an affine scheme and $p$ a morphism from $T$ to $W_{j,k}^{(n)}$.  We denote by $E_{p}$ the bundle (also described in Lemma \ref{format}) given by the  pullback $(\text{id}_{Z_{k}^{(n)} } , p)^{*} \mathbb{E}$ of $\mathbb{E}$ via the map
\[Z_{k}^{(n)} \times T \stackrel{(\text{id}_{Z_{k}^{(n)} } , p)}\to  Z_{k}^{(n)} \times W_{j,k}^{(n)}.
\]\end{definition}
\begin{lemma}\label{proj} \cite[thm. 4.9]{BGK}
On the first formal neighborhood $Z_{k}^{(1)}$,
two  bundles $E$ and $E'$ with
transition matrices
$$\left(\begin{matrix} z^j & p_1   \cr 0 & z^{-j} \cr\end{matrix}\right)\,\,\,
and \,\,\,
\left(\begin{matrix} z^j & p^\prime_1   \cr 0 & z^{-j} \cr\end{matrix}\right)$$
 respectively  are isomorphic if and only if
$p^\prime_1 = \lambda p_1$ for some
 $\lambda \in  {\mathbb C}^{\times}$.
\end{lemma}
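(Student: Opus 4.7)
The plan is to realize an isomorphism $\phi : E \to E'$ on $Z_k^{(1)}$ as a pair of matrices $A \in \mathrm{GL}_2(\mathcal{O}(U^{(1)}))$ and $B \in \mathrm{GL}_2(\mathcal{O}(V^{(1)}))$ satisfying the \v{C}ech cocycle identity $BM_1 = M_1' A$ on $(U \cap V)^{(1)}$, where $M_1, M_1'$ are the two transition matrices. Since $u^2 = 0$ on $Z_k^{(1)}$, I would expand $A = A_0 + u A_1$ with $A_0, A_1 \in M_2(\mathbb{C}[z])$ and $B = B_0 + v B_1$ with $B_0, B_1 \in M_2(\mathbb{C}[\xi])$; on the overlap $v = z^k u$. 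Writing $D = \mathrm{diag}(z^j, z^{-j})$, $p_1 = u q_1(z)$ and $p_1' = u q_1'(z)$ with $q_1, q_1'$ supported in $z$-degrees $l \in [k-j+1, j-1]$, and $P_i = \bigl(\begin{smallmatrix} 0 & q_i \\ 0 & 0 \end{smallmatrix}\bigr)$, the $u^0$-component of the cocycle identity reads $B_0 D = D A_0$. This is the standard condition that $(A_0, B_0)$ is an automorphism of $\mathcal{O}(-j) \oplus \mathcal{O}(j)$ on $\mathbb{P}^1$, which immediately gives $A_0 = \bigl(\begin{smallmatrix} a & 0 \\ c(z) & d \end{smallmatrix}\bigr)$ with $a, d \in \mathbb{C}^\times$ and $c(z) \in H^0(\mathbb{P}^1, \mathcal{O}(2j))$.

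The $u^1$-component of the cocycle identity then reads $D A_1 = z^k B_1 D + B_0 P_1 - P_1' A_0$, and I would require each entry of $A_1$ to lie in $\mathbb{C}[z]$. The decisive entry is the $(1,2)$, which computes to $z^{k-2j}\beta_1(\xi) + z^{-j}(a q_1 - d q_1')$, where $\beta_1 \in \mathbb{C}[\xi]$ is the $(1,2)$ entry of $B_1$. The key observation is that $z^{-j}(a q_1 - d q_1')$ has all of its $z$-exponents in the range $[k-2j+1, -1]$ (by the degree support of $q_1, q_1'$), whereas $z^{k-2j}\beta_1(\xi)$ contributes only to exponents $\leq k-2j$ for any polynomial $\beta_1$. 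These two exponent ranges are disjoint, with the first strictly above the second, so no choice of $\beta_1$ can cancel the non-polynomial part. Polynomiality therefore forces $a q_1 = d q_1'$, equivalently $p_1' = \lambda p_1$ with $\lambda = a/d \in \mathbb{C}^\times$.

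For the converse, given $p_1' = \lambda p_1$, one checks directly that $A = B = \mathrm{diag}(\lambda, 1)$ satisfies $BM_1 = M_1' A$, producing an explicit isomorphism. The main obstacle in the argument is the exponent bookkeeping in the previous paragraph; the polynomiality conditions from the $(1,1)$, $(2,1)$, and $(2,2)$ entries of $A_1$ each involve exponent ranges for the relevant entry of $B_1$ that overlap enough to cancel any negative exponents, and so impose no further restriction on $p_1$ or $p_1'$. Only the $(1,2)$ entry yields a genuine constraint, which is precisely the \emph{M\"obius-type} scaling relation $p_1' = \lambda p_1$.
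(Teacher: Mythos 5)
Your proof is correct, and the exponent bookkeeping in the $(1,2)$ entry --- the heart of the matter --- checks out: with $q_1,q_1'$ supported in degrees $[k-j+1,j-1]$ (the paper's standing normalization from Lemma \ref{format}), the term $z^{-j}(aq_1-dq_1')$ lives in exponents $[k-2j+1,-1]$, strictly above the range $\leq k-2j$ reachable by $z^{k-2j}\beta_1(\xi)$, so regularity on $U^{(1)}$ forces $aq_1=dq_1'$; and your claim that the other three entries impose no constraint is also right, since there the negative exponents of $z^{-j}cq_1^{(\prime)}$ can always be absorbed into $z^k\alpha_1$ or $z^k\delta_1$. Note, however, that the paper does not prove this lemma at all --- it is quoted from \cite[thm.~4.9]{BGK} --- so there is no internal proof to compare against; the closest analogue is the paper's general Lemma \ref{iso}, of which your argument is essentially the $n=1$ specialization. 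There the obstruction to an isomorphism is the vanishing of $[p'\underline{d}-\underline{a}p-z^{-j}pp'\underline{c}_U]$ in $\Ext^{1}(\mathcal{O}(j),\mathcal{O}(-j))$; on $Z_k^{(1)}$ one has $pp'=0$ and $\underline{a}p=ap$, $\underline{d}p'=dp'$ (only the constant terms of the global functions $\underline{a},\underline{d}$ act on classes divisible by $u$), so the condition collapses to $[dp'-ap]=0$, i.e.\ $p'=(a/d)p$ in normal form --- exactly your conclusion. Your route buys a self-contained, elementary verification at the cost of redoing by hand what the spectral-sequence decomposition (\ref{ssdecomp}) and Lemma \ref{iso} package once and for all; the paper's machinery buys the general $n$ statement, where the $pp'\underline{c}_U$ term makes the relation genuinely non-linear.
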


\begin{remark} It follows from this lemma that the coarse moduli space of bundles on $Z_{k}^{(1)}$
coming from non-trivial extensions of $\mathcal{O}(j)$ by $\mathcal{O}(-j)$ is isomorphic to
$ \mathbb P^{2j-k-2}\text{.}$
\end{remark}

\begin{example} On higher infinitesimal neighborhoods we need to consider
far more relations among extension classes then just projectivisation
to obtain the moduli of bundles. The simplest of such examples
occurs in the  case when $k=1$ and
 $j = 2$, so that our
extension classes  have the form
\[p =( p_{1,0} + p_{1,1} z)u + p_{2,1} zu^2.
\]
The set of equivalence classes of vector bundles is then ${\mathbb C}^3 / \sim$ where the equivalence relation is generated by
\begin{itemize}
\item[]  ${(p_{1,0},p_{1,1},p_{2,1}) \sim  (\lambda p_{1,0},
 \lambda p_{1,1}, \lambda p'_{2,1})}\,\,\,
if \, \,\,(p_{1,0},p_{1,1}) \neq (0,0),\,\,\lambda \neq 0,$
\item[]  ${(0,0,p_{2,1}) \sim  ( 0,0, \lambda p_{2,1})},
\,\,\lambda \neq 0.$
\end{itemize}
Note that $p_{2,1}'$ is does not depend on $p$, and that the
quotient topology makes the entire space the only open neighborhood
of the split bundle, which is the image of the origin in $\mathbb C^3$.
\end{example}

\subsection{Stacks of vector bundles}

We now define the stack  of bundles $\mathfrak{M}_{j}(Z_{k}^{(n)})$, the main object we seek to understand in this article.

\begin{definition}
\[\mathfrak{M}_{j}(Z_{k}^{(n)})\colon \text{Schemes} \to \text{Groupoids}
\]
given by
\[T \mapsto \Hom(T,\mathfrak{M}_{j}(Z_{k}^{(n)}))
\]
where
\begin{equation}\begin{split}\text{ob}(\Hom(T,\mathfrak{M}_{j}(Z_{k}^{(n)})) = 
\{ & \text{rank $2$ vector bundles on} \ \ Z^{(n)}_k \times T \  \ \text{which have} \\
&\text{splitting type j and first Chern class 0 for every}
\\
& \text{restriction to} \ \ \ Z_{k}^{(n)} \times \{t\},  t \in T(\mathbb{C}) \}
\end{split}
\end{equation}
and
\[\text{mor}(\Hom(T,\mathfrak{M}_{j}(Z_{k}^{(n)}))(V_{1},V_{2}) = \Isom(V_{1},V_{2}).
\]
\end{definition}
This is a stack \cite{Lau} with respect to the faithfully flat topology on schemes ($\mathbb{C}$-algebras).
Notice that there is automatically a universal bundle $\mathcal{E}$ over $Z^{(n)}_{k} \times \mathfrak{M}_{j}(Z_{k}^{(n)})$.  We can similarly define the stack  $\mathfrak{M}_{j}(\widehat{Z_{k}})$.  The stacks $\mathfrak{M}_{j}(\widetilde{Z_{k}})=\mathfrak{M}_{j}(Z_{k}^{(\infty)})$ and $\mathfrak{M}_{j}(\widehat{Z_{k}})$ are isomorphic and we will not distinguish between them.  We similarly have the stacks $\mathfrak{M}(Z_{k}^{(n)})$ of bundles where we drop the condition on splitting type.

There is an inverse (or projective) system of stacks of finite type over $\mathbb{C}$:
\begin{equation}\label{invlim}\cdots \to  \mathfrak{M}_{j}(Z_{k}^{(3)}) \to \mathfrak{M}_{j}(Z_{k}^{(2)}) \to  \mathfrak{M}_{j}(Z_{k}^{(1)}) \to \mathfrak{M}_{j}(Z_{k}^{(0)}) = \mathfrak{M}_{j}(\mathbb{P}^{1})
\end{equation}
whose inverse limit in the category of algebraic stacks is $\mathfrak{M}_{j}(\widetilde{Z_{k}})$.  Alternatively we can consider the inverse system $\mathfrak{M}_{j}(Z_{k}^{(\bullet)})$ to be an pro-stack of pro-finite type.  This type of approximation is studied in \cite{Ry}.  It seems difficult to compute invariants of the stacks $\mathfrak{M}_{j}(Z_{k}^{(n)})$ using only the definition above so we will find a more explicit description below. 

\subsection{The structure of vector bundle 
isomorphisms}  
Consider the bundles $E_{p}$ defined in Definition \ref{bigbundle}.  There is an exact sequence 
\begin{equation}\label{HomSequence}
\begin{split}
0 \to \Hom(E_{p},E_{p'}) \to & \End(\mathcal{O}(-j) \oplus \mathcal{O}(j)) \stackrel{}\longrightarrow  \\
& \Ext^{1}(\mathcal{O}(-j) \oplus \mathcal{O}(j), \mathcal{O}(-j) \oplus \mathcal{O}(j)) \to \Ext^{1}(E_{p},E_{p'}) \to 0 \text{.}
\end{split}
\end{equation}
We now explain the structure of isomorphisms between families of bundles coming from extensions by constructing an explicit splitting for the first non-trivial map in this sequence.
If  the bundles $E_{p}$ and $E_{p'}$ on $Z^{(n)}_k \times T$, given by maps
\[p,p': T \to R^{(n)}_{j,k}\] are isomorphic
(see equation (\ref{Rjeqn})) then necessarily they have the same splitting type, and in such case
we can represent them by
transition matrices on \[(U^{(n)} \cap V^{(n)})\times T\] by
$\left(\begin{matrix} z^j & p  \cr 0 & z^{-j} \cr\end{matrix}\right)$ and
$\left(\begin{matrix} z^j & p'  \cr 0 & z^{-j} \cr\end{matrix}\right) $
respectively.  An isomorphism between $E_{p}$ and
$E_{p'}$ is given by a pair of invertible matrices \[A =\left(\begin{matrix} a_{U} & b_{U}  \cr c_{U} & d_{U} \cr\end{matrix}\right)\] regular on $U^{(n)} \times T$ and
\[B =\left(\begin{matrix} a_{V} & b_{V}  \cr c_{V} & d_{V} \cr\end{matrix}\right)\]  regular on $V^{(n)} \times T$, such that:
\begin{equation}\label{ABeqnWithMinus}
B \left(\begin{matrix} z^j & p  \cr 0 & z^{-j} \cr\end{matrix}\right)  =
\left(\begin{matrix} z^j & p'  \cr 0 & z^{-j} \cr\end{matrix}\right) A,
\end{equation}
or equivalently
\begin{equation}\label{Beqn}
B =
\left(\begin{matrix} z^j & p'  \cr 0 & z^{-j} \cr\end{matrix}\right) A
  \left(\begin{matrix} z^{-j} & -p  \cr 0 & z^j \cr\end{matrix}\right) = \left(\begin{matrix} a_{U} + z^{-j}p'c_{U} & 
z^{2j}b_{U} +z^j(p'd_{U}  -a_{U}
p) - pp'c_{U}  \cr
z^{-2j}c_{U} & d_{U} - z^{-j}pc_{U} \cr\end{matrix}\right)
\text{.} 
\end{equation}

\begin{definition}
We use the notation $Y^+$ to denote the terms in $Y \in \mathcal{O}((U^{(n)} \cap V^{(n)}) \times T)$ that are not regular on $V^{(n)} \times T$ and  $Y^{+,\geq 2j}$ denotes the terms in $Y$ that are not regular on $V^{(n)} \times T$ and have power of $z$ greater than or equal to $2j$.
\end{definition}

\begin{lemma}\label{iso}
Suppose that $j>0$.  Then a general isomorphism between $E_{p}$ and $E_{p'}$ on $Z^{(n)}_k \times T$
has the form 
\[(A,B) = (M_{U},M_{V}) + (\Phi_{U}(M),\Phi_{V}(M))
\]
\[M_{U} = \left(\begin{matrix}  \underline{a} & \underline{b}_{U} \cr \underline{c}_{U} & \underline{d}
\cr\end{matrix}\right) 
\]
and 
\[\Phi_{U}(M) =  \left(\begin{matrix}  - (z^{-j}p'\underline{c}_{U})^{+} & -z^{-2j}\left(z^j(p'\underline{d}   -\ua p) - pp'\underline{c}_{U}\right)^{+,\geq 2j} \cr 0  &  (z^{-j}p\underline{c}_{U})^{+} 
\cr\end{matrix}\right)
\]

where \[M \in  \Aut_{Z_{k}^{(n)} \times T}(\mathcal{O}(j)\oplus \mathcal{O}(-j)).
\]
satisfying
\begin{equation}\label{orthis}
[p'\underline{d}   -\ua p - z^{-j} pp'\uc_{U}] =0 \in \Ext^{1}(\mathcal{O}(j),\mathcal{O}(-j)).\end{equation}
is uniquely determined by the isomorphism.
%  Hence
%\[\left(\begin{matrix} z^j  & p' \cr 0 & z^{-j} \end{matrix}\right) =
% \left(\begin{matrix} a + z^{-j}p'c &
%z^{2j}b +z^j(p'd  -ap) - pp'c  \cr
%z^{-2j}c & d - z^{-j}pc \cr\end{matrix}\right)
%\left(\begin{matrix} z^j &
%p  \cr
%0 & z^{-j} \cr\end{matrix}\right)
%\left(\begin{matrix} a & b \cr c & d \end{matrix}\right)^{-1}.
%\]

\end{lemma}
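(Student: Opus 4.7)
The plan is to start from equation (\ref{Beqn}), which expresses $B$ explicitly in terms of $A$ and the extension classes $p,p'$, and then impose $V^{(n)}\times T$-regularity on $B$ entry by entry. This will force each entry of $A$ to decompose into a ``global'' piece coming from an automorphism $M$ of $\mathcal{O}(j)\oplus\mathcal{O}(-j)$ and a prescribed correction $\Phi_U(M)$.

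I would begin with the $(2,1)$-entry $z^{-2j}c_U$ of $B$: its $V$-regularity is precisely the condition that $c_U$ arises as the $(2,1)$-entry of a global automorphism of $\mathcal{O}(j)\oplus\mathcal{O}(-j)$ (equivalently, a global section of $\mathcal{O}(-2j)$), so I relabel $c_U=\underline{c}_U$. For the $(1,1)$-entry $a_U+z^{-j}p'\underline{c}_U$, split $a_U=\underline{a}+a_U^{+}$ into the monomials regular on both charts and those regular on $U$ but not on $V$; the $V$-regularity condition then forces $a_U^{+}=-(z^{-j}p'\underline{c}_U)^{+}$, leaving $\underline{a}$ as an arbitrary global function (the $(1,1)$-entry of $M$). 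The symmetric analysis on the $(2,2)$-entry yields $d_U=\underline{d}+(z^{-j}p\underline{c}_U)^{+}$.

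The crucial step is the $(1,2)$-entry. Substituting the previous formulas for $a_U,d_U,c_U$ into (\ref{Beqn}) and setting $W_0=z^j(p'\underline{d}-\underline{a}p)-pp'\underline{c}_U$, the $V$-regularity condition reads $(z^{2j}b_U+W_0+T)^{+}=0$ where $T=z^j p'(z^{-j}p\underline{c}_U)^{+}+z^j p(z^{-j}p'\underline{c}_U)^{+}$ is the cross contribution. Using the constraints from Lemma \ref{format}, namely $l\leq j-1$ on monomials of $p,p'$ and $a\leq kb-2j$ on monomials of $\underline{c}_U$, the $z$-degree $l+a-j$ of any monomial of $z^{-j}p\underline{c}_U$ is bounded above by $kb-2j-1$, which is strictly less than $k(i+b)$, so every such monomial is $V$-regular and $(z^{-j}p\underline{c}_U)^{+}=0$; similarly $(z^{-j}p'\underline{c}_U)^{+}=0$, hence $T=0$. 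Absorbing the $z$-degree $\geq 2j$ non-$V$-regular part of $W_0$ into $b_U$ then forces $b_U=\underline{b}_U-z^{-2j}W_0^{+,\geq 2j}$, matching the $(1,2)$-entry of $\Phi_U(M)$, with $\underline{b}_U$ free subject only to $z^{2j}\underline{b}_U$ being $V$-regular (i.e.\ being the $(1,2)$-entry of $M$). The residual obstruction is $W_0^{+,<2j}=0$, equivalently $W_0\in(V\text{-reg})+z^{2j}(U\text{-reg})$; after dividing by $z^j$ and using the \v{C}ech description of $\Ext^1(\mathcal{O}(j),\mathcal{O}(-j))$ from Lemma \ref{format}, this reduces to the vanishing of $[p'\underline{d}-\underline{a}p-z^{-j}pp'\underline{c}_U]$ in $\Ext^1$, i.e.\ to (\ref{orthis}).

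Uniqueness of $M$ is immediate: $\underline{c}_U$ equals the $(2,1)$-entry of $A$ (since $\Phi_U(M)_{21}=0$), and then $\underline{a},\underline{d},\underline{b}_U$ are read off from the remaining entries of $A$ by subtracting the now-computed corrections. The principal technical obstacle is the final identification of the $(+,<2j)$-vanishing with the Ext-class vanishing, which requires carefully unwinding the \v{C}ech coboundary description of $\Ext^1(\mathcal{O}(j),\mathcal{O}(-j))$ recalled from Lemma \ref{format}, together with the verification via degree bounds that the cross-term $T$ vanishes identically so that the obstruction depends only on $M$ and not on the auxiliary corrections from Steps~2.
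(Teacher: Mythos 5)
Your overall strategy is the same as the paper's: you impose $V^{(n)}\times T$-regularity on the entries of $B$ as written in (\ref{Beqn}), identify $c_U$ as the $(2,1)$-entry of a global endomorphism, peel off the purely global parts $\underline{a},\underline{d},\underline{b}_U$ of $a_U,d_U,b_U$, and read the residual condition on the $(1,2)$-entry as the vanishing of a class in $\Ext^{1}(\mathcal{O}(j),\mathcal{O}(-j))$. (The paper phrases this as solving the three coboundary equations coming from setting (\ref{Zeromatrix}) to zero, using $H^{1}(Z_{k}^{(n)}\times T,\mathcal{O})=0$; that is the same computation.)

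However, your treatment of the cross-term $T$ contains a genuine error. The entry $\underline{c}_U$ is the $(2,1)$-component of an endomorphism of $\mathcal{O}(j)\oplus\mathcal{O}(-j)$, i.e.\ a map from the sub-bundle $\mathcal{O}(-j)$ toward the quotient direction $\mathcal{O}(j)$; it is a global section of $\mathcal{O}(2j)$, not of $\mathcal{O}(-2j)$ (this is exactly what makes $\Aut_{\mathbb{P}^1}(\mathcal{O}(j)\oplus\mathcal{O}(-j))$ contain a $(2j+1)$-dimensional unipotent part already over $\ell$). Its monomials $z^{a}u^{b}$ therefore satisfy $0\leq a\leq kb+2j$, not $a\leq kb-2j$ as you assert. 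With the correct bound, $(z^{-j}p\underline{c}_U)^{+}$ and $(z^{-j}p'\underline{c}_U)^{+}$ do \emph{not} vanish in general: for instance with $k=1$, $j=2$, $p=zu$, $\underline{c}_U=z^{4}$ one gets $z^{-2}p\underline{c}_U=z^{3}u$, which is not regular on $V$. So $T\neq 0$, and your reduction of the $(1,2)$-entry condition to (\ref{orthis}) does not go through as written; one must instead track how the non-$V$-regular parts of $z^{-j}p'\underline{c}_U$ and $z^{-j}p\underline{c}_U$ redistribute between the corrections to $a_U$, $d_U$ and the right-hand side of the $b$-equation, which is precisely what produces the term $z^{-j}pp'\underline{c}_U$ in (\ref{orthis}). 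A sanity check would have caught this: if your vanishing claim were true, the $(1,1)$- and $(2,2)$-entries of $\Phi_U(M)$ in the statement you are proving would be identically zero, and the lemma would not bother to record them. Separately, you verify uniqueness of $M$ but never address why $A$ is invertible exactly when $M_U$ is (the paper disposes of this in one line using that $u$ divides $p$ and $p'$); that point is needed for $M$ to land in $\Aut$ rather than $\End$.
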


\begin{proof}
First suppose that such an isomorphism exists, between $E_{p}$ and $E_{p'}$.  Then we have
\begin{equation}\label{addversion}
\left(\begin{matrix} z^j & p'  \cr 0 & z^{-j} \cr\end{matrix}\right) A - B \left(\begin{matrix} z^j & p  \cr 0 & z^{-j} \cr\end{matrix}\right)  = 0.
\end{equation}
The left hand side comes out to be
\begin{equation}\label{Zeromatrix}
\left(\begin{matrix} p'c_{U}+(a_{U}-a_{V})z^{j} & d_{U}p'-a_{V}p+z^{j}b_{U}-z^{-j}b_{V}  \cr c_{U} z^{-j}-c_{V} z^{j} & z^{-j}(d_{U}-d_{V})-c_{V}p \cr\end{matrix}\right) .
\end{equation}
The lower left corner of (\ref{Zeromatrix}) implies first of all that $c$ must be a section $\uc$ of $\mathcal{O}(2j)$.  
We need to arrange for the vanishing of all terms in (\ref{Zeromatrix}).
%which are not regular on $ V^{(n)} \times T$.  
Therefore, we need to solve the equations:
%$$\begin{array} {l}
 %(a _{U}+ z^{-j}p'c_{U})^{+} =0\cr
%(z^{2j}b_{U} +z^j(p'd_{U}  -pa_{U}) - pp'c_{U})^{+}=0  \cr
%( d_{U} - z^{-j}pc_{U})^{+}=0 \end{array}$$
$$\begin{array} {l}
a_{U}-a_{V} = -z^{-j}p'\uc_{U}\cr
z^{j}b_{U}-z^{-j}b_{V}  =-d_{U}p'+a_{V}p  \cr
d_{U}-d_{V}= z^{j}\uc_{V}p\text{.} \end{array}$$
Becasue $H^{1}(Z_{k}^{(n)} \times T,\mathcal{O})$ vanishes, the first and third equations have solutions which are unique up to global functions.
Let 
\[a_{U}= \ua - (z^{-j}p'\uc_{U})^{+} 
\] and 
\[d_{U} =\ud +(z^{j}\uc_{V}p)^{+} .\]  These solved the first and third equation.  If we substitute into the second equation, it reads
\begin{equation}\label{itreads}\begin{split}z^{j}b_{U}-z^{-j}b_{V}  &=-(z^{j}\uc_{V}p)^{+}p'+(-(z^{-j}p' \uc_{U})^{+}+z^{-j}p' \uc_{U})p -\ud p'+\ua p  \\
& = -\ud p'+\ua p  +z^{-j}pp'\uc_{U}
\text{.}
\end{split}
\end{equation}
This implies that
\[ 
[p'\underline{d}   -\ua p - z^{-j} pp'\uc_{U}] =0 \in \Ext^{1}(\mathcal{O}(j),\mathcal{O}(-j)).
\]
Conversely, suppose that these conditions are satisfied by some $\underline{a}$, $\underline{d}$, $\uc$, $p$, and $p'$, let us record the general form of an element of $\Isom_{Z^{(n)}_k \times T}(E_{p},E_{p'})$.  It remains only to determine the expression for $b_{U}$.
By the assumptions we already know that
\[\left(z^j(p'\underline{d}   -\ua p) - pp'\uc_{U}\right)^{+,<2j}
\]
is regular on $V^{(n)} \times T $.
Hence
\[b_{U} = \ub_{U}-z^{-2j}\left(z^j(p'\underline{d}   -\ua p) - pp'\uc_{U}\right)^{+,\geq 2j}.
\]
Finally, since $u$ divides $p$ and $p'$, we know that $A$ is invertible if and only if $M_{U}$ is.
\end{proof}

\begin{remark}
We conclude that the expression of the element $(A,B)$ of $\Hom(E_{p},E_{p'})$ under the decomposition (\ref{ssdecomp}) \[\Hom(E_{p},E_{p'}) =\Hom(\mathcal{O}(j),\mathcal{O}(-j)) \oplus \phi(\text{ker}(d_{1}^{1,-1})) \oplus \psi(\text{ker}(d_{2}^{0,0}))
\] from the appendix is satisfied if we take $\underline{b} \in \Hom(\mathcal{O}(j),\mathcal{O}(-j)),$
\[\psi_{U}(c) = \left(\begin{matrix}  - (z^{-j}p'\uc_{U})^{+}  & z^{-2j}\left(  pp'\uc_{U}\right)^{+,\geq 2j} \cr \uc _{U}& (z^{-j}p\uc_{U})^{+} \end{matrix}\right)
\]
and
\[\phi_{U}(\underline{a},\underline{d}) =  \left(\begin{matrix}  \underline{a}  &-z^{-2j}\left(z^j(p'\underline{d}   -\ua p) \right)^{+,\geq 2j} \cr 0 & \underline{d} \end{matrix}\right).
\]
\end{remark}

\subsection{Bundle isomorphism viewed as an equivalence relation}\label{Mob}
Although we have worked out the structure of the space of isomorphisms between two given bundles, this does not yet give a criterion for when two bundles are isomorphic nor does it provide any understanding of the equivalence relation on $W_{j,k}^{(n)}$ given by isomorphisms of vector bundles.  We show that there are algebraic groups $G_{j,k}^{(n)}$ acting on $W_{j,k}^{(n)}$ so that the orbits of this action are identified with the equivalence classes.   This action (\ref{nobulletmobius}) takes on the familiar form of a M\"obius transformation.  Lange studied in \cite{LA} (see also Dr\'ezet \cite{Dr}) the question of universal bundles over the projectivized space of extensions.  In a specific example we study here a more difficult problem, the difference being that we do not remove the origin and we consider all vector bundle isomorphisms, not just those that correspond to scaling the extension.  First we need to define the structure of a scheme on the sets $\Aut_{Z_{k}^{(n)}}(\mathcal{O}(j) \oplus \mathcal{O}(-j))$ for $n$ finite.

\begin{definition}

Consider the functors from schemes to sets given by
\[T \mapsto \Aut_{Z_{k}^{(n)} \times T}(\mathcal{O}(j) \oplus \mathcal{O}(-j)).
\]
These functors are $\mathbb{C}$-groups (sheaves of groups in the faithfully flat topology on schemes) and are easily seen to be representable by reduced schemes.  They are affine, being defined inside the finite dimensional affine space
\[\mathbb{E}\text{nd}_{Z_{k}^{(n)}}(\mathcal{O}(j) \oplus \mathcal{O}(-j))
\] defined with coordinates as in \ref{spectrum} as the complement of the pre-image of $0$ by the morphism 
\[det_{0}: \mathbb{E}\text{nd}_{Z_{k}^{(n)}}(\mathcal{O}(j) \oplus \mathcal{O}(-j)) \to \mathcal{O}(Z_{k}^{(n)}) \to Spec(\mathbb{C}[s]).
\]
sending $s$ to the restriction of the determinant to $\ell$.  When we pass to $\mathbb{C}$ points we get the standard determinant followed by restriction to $\ell$
\[det_{0}: \End_{Z_{k}^{(n)}}(\mathcal{O}(j) \oplus \mathcal{O}(-j)) \to \mathcal{O}(Z_{k}^{(n)}) \to \mathcal{O}(Z_{k}^{(0)})=\mathbb{C}.
\]
We denote these finite dimensional algebraic groups by $G_{j,k}^{(n)} $.
These form a directed system of $\mathbb{C}$-spaces (sheaf of sets for the faithfully flat topology on the category of
$\mathbb{C}$-algebras) and their direct limit as a $\mathbb{C}$-space (see \cite{BL1} for this yoga) is representable by an infinite dimensional algebraic variety,  \[\widetilde{G_{j,k}}= G_{j,k}^{(\infty)}\] which has $\Aut_{Z_{k}^{(\infty)}}(\mathcal{O}(j) \oplus \mathcal{O}(-j))$ as its underlying set of $\mathbb{C}$ points.  In fact, it is an infinite-dimensional algebraic group.
 $\Aut_{Z_{k}^{(\infty)}}(\mathcal{O}(j) \oplus \mathcal{O}(-j))$.   The sequence $G_{j,k}^{(\bullet)}$

\begin{equation}\label{groupinvlim}\cdots \to  G_{j,k}^{(3)} \to G_{j,k}^{(2)}  \to G_{j,k}^{(1)} \to G_{j,k}^{(0)}  = \Aut_{\mathbb{P}^{1}}(\mathcal{O}(j)\oplus \mathcal{O}(-j))
\end{equation}
is an pro-finite-type pro-scheme.  We often write elements of $\Hom(T,G_{j,k}^{(n)})$ as matracies 

\end{definition}
Because of equation (\ref{Beqn}) there must exist some function $h$, regular on $V \times T$ such that
\[p' =  \frac{pz^{j} \underline{a} + h}{z^{j}\underline{d} - p\uc_{U}} = \frac{z^{j} \underline{a} (p+ \frac{h}{z^{j} \underline{a}})}{z^{j}\underline{d} - p\underline{c}_{U}}\text{.}
\]
So we only need to know if $h$ can affect $p$ in cohomology.  $h$ is a linear combination of terms $z^l u^i$ with $l+j \leq ki$.  Therefore $\frac{h}{z^{j}\underline{a}}$ has terms of the form $z^{l} u^i$ with $l+2j \leq ki$. However, the terms in the  correct range have $1 \leq i \leq \lfloor \frac{2j-1}{k} \rfloor$  and $ki-j+1 \leq l \leq j-1$.  So if $\frac{h}{z^{j}\underline{a}}$ had a term in the correct range we would have
\[l+j+1 \leq  ki-j+1 \leq l
\]
which is a contradiction.
Thus none of the terms appearing in $\frac{h}{z^{j}\underline{a}}$ are in the correct range.   So we can drop $h$ all together and conclude that the action of $\Hom(T, G_{j,k}^{(n)})$ on $\Hom(T, W_{j,k}^{(n)})$ is
\[p'=\frac{\ua p}{\underline{d}-z^{-j}p\underline{c}_{U}} = \left(\frac{\underline{a}}{\underline{d}}\right)p\sum_{t=0}^{\infty}\left(\frac{z^{-j}p\underline{c}_{U}}{\underline{d}}\right)^{t} \]
%\ \hfill $\Box$
Consider the following direct sum
 decomposition of the vector space of functions
\[\mathcal{O}_{Z_{k}^{(n)}}(U^{(n)} \cap V^{(n)}) = \mathcal{O}_{Z_{k}^{(n)}}(U^{(n)} \cap V^{(n)})^{\succ} \oplus \mathcal{O}_{Z_{k}^{(n)}}(U^{(n)} \cap V^{(n)})_{good} \oplus \mathcal{O}_{Z_{k}^{(n)}}(U^{(n)} \cap V^{(n)})^{\prec}
\]
where the sector named ``good'' corresponds to the terms appearing in Theorem \ref{reps}, and also
\[z^{j}\mathcal{O}_{Z_{k}^{(n)}}(U^{(n)} \cap V^{(n)})^{\prec} \subset \mathcal{O}(V^{(n)})\]
and
 \[z^{-j}\mathcal{O}_{Z_{k}^{(n)}}(U^{(n)} \cap V^{(n)})^{\succ} \subset \mathcal{O}(U^{(n)})\]
\[q - q_{good} = q^{\succ} +q^{\prec}.
\]
As in equation (\ref{ABeqnWithMinus}) we write elements of 
\[\Hom(T,G_{j,k}^{(n)})\subset H^{0}(Z_{k}^{(n)} \times T, \mathcal{O}^{\oplus 2} \oplus \mathcal{O}(2j) \oplus \mathcal{O}(-2j))
\] in the form
\begin{equation}
g=\left(\begin{matrix} \underline{a} & \underline{b}  \cr \underline{c} & \underline{d} \cr\end{matrix}\right) .
\end{equation}
with $\underline{b}= (\ub_{U},\ub_{V})$ and $\ub_{U}$ holomorphic on $U^{(n)} \times T$, etc.
First of all notice that the groups $\Hom(T,G_{j,k}^{(n)})$ acts on the functions $p$ on  $U^{(n)} \cap V^{(n)} \times T$  which vanish on the zero section by the formula
\begin{equation}
\label{nobulletmobius}
gp=\frac{\underline{a}p- z^{j}\underline{b}_{U}}{\underline{d}-z^{-j}p\underline{c}_{U}} .
\end{equation}
A special case  (where $\underline{b}$ and $\underline{c}$ are taken to be zero) of this action was observed for general varieties and bundles in \cite{Dr}.  For $n$ finite, such functions vanishing on $\ell$ belong to  $u\mathbb{C}[z,z^{-1}][u]/(u^{n})$, in the case $n=\infty$ such functions belong to $u\mathbb{C}[z,z^{-1}][[u]]$.  
The action $p \mapsto gp$ does not preserve the finite dimensional space $R_{j,k}^{(n)}$ which was written in (\ref{Rjeqn}).

\begin{definition}\label{actdef}
So we need to correct this action to a morphism

\[
G_{j,k}^{(n)} \times R^{(n)}_{j,k} \to R^{(n)}_{j,k}
\]
\begin{equation}\label{correctedMob}
\begin{split}
(g,p) \mapsto g \bullet p &=  \frac{\ua p- z^{j}\ub_{U}}{\ud-z^{-j}p\uc_{U}} - \left(\frac{\ua p- z^{j}\ub_{U}}{\ud-z^{-j}p\uc_{U}} \right)^{\succ} - \left( \frac{\ua p- z^{j}\ub_{U}}{\ud-z^{-j}p\uc_{U}} \right)^{\prec}.
\\
&=\left( \frac{\ua p- z^{j}\ub_{U}}{\ud-z^{-j}p\uc_{U}} \right)_{good}
\end{split}
\end{equation}
which will become one of the structure maps of a groupoid (see equation (\ref{tdef})).  It is not the action of a group.
\end{definition}

Consider 
\begin{equation}\label{Adef}
A_{g}(p) = \left(\begin{matrix} \ua - (z^{-j}p\uc_{U})^{+}  &
\ub_{U}  -z^{-2j}\left(z^j\left((g\bullet p)\ud  -\ua p\right) - p(g\bullet p)\uc_{U}\right)^{+,\geq 2j} \cr
\uc_{U} & \ud + (z^{-j}\uc_{U}(g\bullet p))^{+} \cr\end{matrix}\right)
\end{equation}
and

\begin{equation}\label{Bdef}
B_{g}(p) =\left(\begin{matrix} \ua +(z^{-j}p\uc_{U})^{+} &
\ub_{V}+ \left(z^j\left((g\bullet p)\ud  -\ua p\right) - p(g\bullet p)\uc_{U}\right)^{+,< 2j}  \cr
\uc_{V} & \ud-  (z^{-j}\uc_{U}(g\bullet p))^{+}  \cr\end{matrix}\right).
\end{equation}
They are holomorphic over $U^{(n)} \times T$ and $V^{(n)} \times T$.
They satisfy
\begin{equation}\label{provide}B_{g}(p) \left(\begin{matrix} z^j & p  \cr 0 & z^{-j} \cr\end{matrix}\right)  =
\left(\begin{matrix} z^j & g\bullet p  \cr 0 & z^{-j} \cr\end{matrix}\right) A_{g}(p)
\end{equation}
and so the pair $(A_{g}(p),B_{g}(p))$ provides an isomorphism between $E_{p}$ and $E_{g\bullet p}$.  We have shown the following Lemma.
\begin{lemma}
There is a morphism 
\[
G_{j,k}^{(n)} \times R^{(n)}_{j,k} \to R^{(n)}_{j,k}
\]
\begin{equation}
(g,p) \mapsto g \bullet p
\end{equation}
such that for two bundles $E_{p}$ and $E_{p'}$ of constant splitting type $j$, 
\begin{equation}\begin{split}\Isom_{Z^{(n)}_k \times T}(E_{p}, E_{p'}) & =\{g \in \Hom(T,
G_{j,k}^{(n)}) \phantom{x}|\phantom{x} g \bullet p = p' \} \\
& = \{g \in \Hom(T,
G_{j,k}^{(n)}) \phantom{x}|\phantom{x} \text{\ref{orthis} is satisfied} \}.
\end{split}
\end{equation}
\end{lemma}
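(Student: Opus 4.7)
The lemma asserts three things: well-definedness of the map $(g,p)\mapsto g\bullet p$, that $(A_g(p),B_g(p))$ is an isomorphism $E_p\to E_{g\bullet p}$, and that every isomorphism arises this way with $g\bullet p = p'$ equivalent to the cohomological condition (\ref{orthis}). The first is essentially contained in Definition \ref{actdef}: the raw M\"obius expression $q = (\ua p - z^{j}\ub_{U})/(\ud - z^{-j}p\uc_{U})$ is regular on $(U^{(n)}\cap V^{(n)})\times T$ and vanishes on $\ell\times T$, because the denominator reduces to the unit $\ud|_{\ell}$ on $\ell$ while the numerator vanishes there; projection onto the summand $R^{(n)}_{j,k}=\mathcal{O}(U^{(n)}\cap V^{(n)})_{good}$ is algebraic in the coefficients and gives the required morphism of schemes.

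For the forward direction of the isomorphism correspondence, I would verify equation (\ref{provide}) by direct matrix computation. The key identity, obtained by reformulating Definition \ref{actdef}, is
\[\ua p - z^{j}\ub_{U} = (\ud - z^{-j}p\uc_{U})(g\bullet p + q^{\succ} + q^{\prec}).\]
Using this, each entry of $\begin{pmatrix} z^j & g\bullet p\\ 0 & z^{-j}\end{pmatrix}A_g(p) - B_g(p)\begin{pmatrix} z^j & p\\ 0 & z^{-j}\end{pmatrix}$ vanishes: the diagonal and lower-left entries cancel thanks to the $(+)$ corrections built into (\ref{Adef}) and (\ref{Bdef}), and the upper-right entry cancels because $q^{\succ}$ is absorbed by the $(+,\geq 2j)$ correction on the $A_g$ side while $q^{\prec}$ is absorbed by the $(+,<2j)$ correction on the $B_g$ side. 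Holomorphicity of $A_g(p)$ on $U^{(n)}\times T$ and of $B_g(p)$ on $V^{(n)}\times T$ is built into the definitions, and invertibility holds because both matrices restrict on $\ell$ to the invertible diagonal determined by $g\in G^{(n)}_{j,k}$.

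For the converse I would invoke Lemma \ref{iso}: any $(A,B)\in \Isom_{Z_k^{(n)}\times T}(E_p,E_{p'})$ is determined by a unique $M\in \Aut_{Z^{(n)}_k\times T}(\mathcal{O}(j)\oplus\mathcal{O}(-j)) = \Hom(T,G^{(n)}_{j,k})$ satisfying (\ref{orthis}). Setting $g = M$, the calculation carried out between equation (\ref{Beqn}) and equation (\ref{nobulletmobius}) shows $p' = q$ modulo the summands $\mathcal{O}(U^{(n)}\cap V^{(n)})^{\succ}\oplus \mathcal{O}(U^{(n)}\cap V^{(n)})^{\prec}$, so the requirement $p'\in R^{(n)}_{j,k}$ forces $p' = q_{good} = g\bullet p$. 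Conversely, $g\bullet p = p'$ forces $q - p'$ into those two summands, each of which by construction represents the trivial class in $\Ext^{1}(\mathcal{O}(j),\mathcal{O}(-j))$, recovering (\ref{orthis}).

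The delicate step is the bookkeeping in the matrix identity of step (ii): one must simultaneously respect the tripartite decomposition of $q$ into $\succ$, $good$, and $\prec$ parts, the $(+,\geq 2j)$ versus $(+,<2j)$ splitting of the off-diagonal correction, and the chart-regularity requirements on $A_g$ and $B_g$. Once this verification is in place, the rest of the lemma follows immediately by combining Definition \ref{actdef} with Lemma \ref{iso}.
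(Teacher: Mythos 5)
Your proposal is correct and follows essentially the same route as the paper: the paper's proof of this lemma is precisely the preceding construction, namely the $h$-computation after equation (\ref{Beqn}) showing the action is the M\"obius formula, Definition \ref{actdef} projecting onto the good summand, the explicit pair $(A_{g}(p),B_{g}(p))$ verifying (\ref{provide}), and Lemma \ref{iso} for the converse identification of isomorphisms with elements $g$ satisfying (\ref{orthis}). Your identification of the matrix bookkeeping (the interplay of the $\succ/good/\prec$ decomposition with the $(+,\geq 2j)$ versus $(+,<2j)$ corrections) as the delicate step is exactly where the paper's verification lives as well.
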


\ \hfill $\Box$

\begin{definition}\label{dotp}
Consider the isomorphism \[(A_{g_{1}}( g_{2} \bullet p) A_{g_{2}}(p), B_{g_{1}}(g_{2} \bullet p) B_{g_{2}}(p))\] between $E_{p}$ and $E_{g_{1} \bullet (g_{2} \bullet p)}$.  By lemma \ref{iso}, there is a unique element \[g_{1} \bullet_{p} g_{2} \in G_{j,k}^{(n)}(\mathbb{C})\]  such that this isomorphism equals $(A_{g_{1}\bullet_{p} g_{2}}, B_{g_{1}\bullet_{p} g_{2}})$.   Similarly, the isomorphism $(A_{g}(p)^{-1},B_{g}(p)^{-1})$ between $E_{g\bullet p}$ and $E_{p}$ corresponds to a unique 
\begin{equation}\label{pinv}g^{(-1)_{p}}\in G_{j,k}^{(n)}(\mathbb{C}).\end{equation}   From here it is clear (since both $A_{e_{G_{j,k}^{(n)}}}(p)$ and $B_{e_{G_{j,k}^{(n)}}}(p)$ are the identity matrix) that 
\begin{equation}\label{annoying}g \bullet_{p} g^{(-1)_{p}} = e_{G_{j,k}^{(n)}}=g^{(-1)_{p}} \bullet_{p} g.
\end{equation}
The elements $g_{1} \bullet_{p} g_{2}$ vary  algebraically with $g_{1}$ and $g_{2}$ and give a morphism of schemes 
\[G_{j,k}^{(n)} \times G_{j,k}^{(n)}  \times W_{j,k}^{(n)} \to G_{j,k}^{(n)}
\]
\[(g_{1},g_{2},p) \mapsto g_{1}\bullet_{p} g_{2} \text{.}
\]
The restriction to $p=0$ in $W_{j,k}^{(n)}$ gives us back the standard multiplication but in general this structure does depend on $p$.
\end{definition}

Therefore by definition we have 
\begin{equation} \label{defGrpStr}
B_{g_{1}}(g_{2} \bullet p) B_{g_{2}}(p) = B_{g_{1} \bullet_{p} g_{2}}(p).
\end{equation}
(and also $A_{g_{1}}( g_{2} \bullet p) A_{g_{2}}(p) = A_{g_{1} \bullet_{p} g_{2}}(p)$).
An immediate consequence of this together with (\ref{provide}) is
\begin{equation}\label{really}g_{1}\bullet (g_{2} \bullet p) = (g_{1} \bullet_{p} g_{2}) \bullet p \text{,}
\end{equation}
and we also have
\begin{equation}
\begin{split}
B_{(g_{1} \bullet_{(g_{3} \bullet {p})} g_{2}) \bullet_{p} g_{3}}(p) & = B_{g_{1} \bullet_{(g_{3} \bullet {p})} g_{2}}( g_{3} \bullet p)B_{ g_{3}}(p) =
B_{g_{1}}(g_{2} \bullet (g_{3} \bullet p))B_{g_{2}}(g_{3} \bullet p) B_{g_{3}}(p) \\ & =B_{g_{1}}(g_{2} \bullet (g_{3} \bullet p)) B_{g_{2} \bullet_{p} g_{3}}(p) = B_{g_{1} \bullet_{p} (g_{2} \bullet_{p} g_{3})}(p)
\end{split}
\end{equation}
and similarly for  $A_{g}(p)$.  Because every isomorphism $(A,B)$ which takes one of our chosen  transitions matrices corresponding to a bundle $E_{p}$ to another transition matrix of the same form corresponds (\ref{HomSequence}) to a unique $g \in \Hom(T,G_{j,k}^{(n)})$ we  conclude that 
\begin{equation}\label{concludethat}
(g_{1} \bullet_{(g_{3} \bullet {p})} g_{2}) \bullet_{p} g_{3} = g_{1} \bullet_{p} (g_{2} \bullet_{p} g_{3}).
\end{equation}
This will be used to verify the associativity of the groupoid structure.  A direct inspection of (\ref{correctedMob}), (\ref{Adef}) and (\ref{Bdef}) shows that identity matrix $e_{G_{j,k}^{(n)}}$ satisfies 
\begin{equation}\label{idenprop}
e_{G_{j,k}^{(n)}} \bullet p = p  
\end{equation}
for any $p$ and corresponds to the identity map from $E_{p}$ to itself.   Therefore we of course have 
\begin{equation}\label{ofcourse}
e_{G_{j,k}^{(n)}} \bullet_{p} g = g = g \bullet_{p} e_{G_{j,k}^{(n)}} 
\end{equation}
for any $p$.

\section{An explicit groupoid in schemes}\label{DefOfGrpd}
In this section we describe an explicit groupoid in schemes and show that its associated stack is isomorphic to the stack of rank $2$ vector bundles of splitting type $j$ and first Chern class $0$ on $Z_{k}^{(n)}$.
\subsection{Review of groupoids in schemes and their sheaf theory}\label{RevGrp}
We begin with a review of the definition of a groupoid in schemes and the notion of a sheaf on a groupoid in schemes.
Recall that a groupoid
\[\mathcal{G} = (A,R,s,t,m,e,\iota)
\] in schemes consists of schemes $A$ (the atlas) and $R$ (the relations), morphisms $s,t,m,e, \iota$
\begin{equation}\label{GroupoidForGerbe}
\xymatrix{**[l] R  \ar@/_2pc/[r]_-{s} \ar@/^2pc/[r]^-{t} & **[r] \ar@/^0pc/[l]_-{e}  A}
\end{equation}

\[\xymatrix{{R}_{t} \times_{A} {{{}_{s}}R} \ar@/^0pc/[r]^-{m} & R}
\]
and
\[\xymatrix{R \ar@/^0pc/[r]^-{\iota} &R  }
\]
which satisfy some conditions which we write below.
Let $p_{1}, p_{2}$ be the first and second projections
\[R_{t} \times_{A} {}_{s}R \stackrel{p_{1},p_{2}} \longrightarrow R
\]
and let $\Delta$ be the diagonal
\[R_{t} \times_{A} {}_{s}R \stackrel{\Delta} \longleftarrow R.
\]
The morphisms then must satisfy
\begin{equation}
\label{cond1}
m \circ (m, \text{id}_{R}) = m \circ (\text{id}_{R},m)\end{equation} on all composable elements of $R \times R \times R$,
\begin{equation}\label{cond2}
t \circ m = t \circ p_{2}, \phantom{xx} s \circ m = s \circ p_{1}
\end{equation}
on all composable elements of $R \times R$
\begin{equation}
\label{cond3}
m \circ (\iota, \text{id}_{R}) \circ \Delta = e \circ s, \phantom{xx} m \circ (\text{id}_{R},\iota) \circ \Delta = e \circ s
\end{equation} on $R$,
and also
\begin{equation}
\label{cond4}
m \circ (\text{id}_{R},e \circ t) \circ{\Delta}= \text{id}_{R}, \phantom{xx} m\circ(e \circ s, \text{id}_{R}) \circ{\Delta} = \text{id}_{R}
\end{equation}
on $R$.  Notice that for any scheme $S$ that by taking the set of morphisms of schemes from $S$ into $R$ and $A$ one gets a pair of sets and these naturally form a groupoid in sets using the obvious maps.    We denote this groupoid in sets by
\[\Hom(S,\mathcal{G})
.\]
A (coherent/locally free of rank $r$) sheaf of modules on the groupoid consists of a (coherent/locally free of rank $r$) sheaf $\mathcal{S}$  of $\mathcal{O}_{A}$ modules on $A$ together with an isomorphism $f$ of sheaves of $\mathcal{O}_{R}$ modules over $R$
\[f:s^{*} \mathcal{S} \to t^{*} \mathcal{S}
\]
which satisfies
\begin{equation}\label{isomcond1}p_{2}^{*}f \circ p_{1}^{*} f = m^{*}f
\end{equation}
and
\begin{equation}\label{isomcond2}
e^{*}f = \text{id}.
\end{equation}
To make sense of this equality, one must use the identities
\[s \circ p_1 = s \circ m,\phantom{xx}\text{and}\phantom{xx}  t \circ p_2 = t \circ m.
\]

\subsection{Stacks from groupoids} \label{Grpds2Stacks}

Let $\mathcal{G} = (A,R,s,t,m,e,\iota)
$ be a groupoid in schemes.

We associate to it a stack $[\mathcal{G}]$ defined as the stack on the fppf site associated to the prestack $\text{pre-}[\mathcal{G}]$ which associates to any test scheme $T$ the groupoid in sets
\[\text{pre-}[\mathcal{G}](T)=\Hom(T,\mathcal{G}).
\]
Notice that such a morphism consists of a map from maps from $T$ to $A$, and $T$ to $R$ which satisfy the obvious compatibilities.
\begin{remark}
In the case that $R=G \times A$ and the groupoid structure is just given by a group action of $G$ on $A$, we may denote the associated quotient stack by $[A/G]$, leaving the structure implicit.
\end{remark}

There is an equivalence \cite{Lau} of Abelian categories of coherent sheaves which takes vector bundles to vector bundles
\begin{equation}\label{CohEquiv}  \text{Coh}(\mathcal{G})  \stackrel{\cong}\longrightarrow   \text{Coh}([\mathcal{G}]).
\end{equation}
\begin{definition}
We denote by  $[\mathcal{S}]$ the sheaf on $[\mathcal{G}]$ corresponding to a sheaf $\mathcal{S}$ on $\mathcal{G}$ under the equivalence \ref{CohEquiv} given above.
\end{definition}
\subsection{Groupoid presentations for stacks of rank $2$  bundles}
We define a groupoid in schemes to be called $\mathcal{G}_{j,k}^{(n)}$.  The atlas of  $\mathcal{G}_{j,k}^{(n)}$ is  $W_{j,k}^{(n)}$ and the relations are $G_{j,k}^{(n)} \times W_{j,k}^{(n)}$.

The arrow $s$ is given by the projection
\[G_{j,k}^{(n)} \times W_{j,k}^{(n)}  \stackrel{s}\to W_{j,k}^{(n)} .
\]
defined by
\[(g , p) \mapsto p.
\]
The arrow $t$ is given by the map
\begin{equation}\label{tdef}G_{j,k}^{(n)} \times W_{j,k}^{(n)}  \stackrel{t}\to W_{j,k}^{(n)} .
\end{equation}
defined by
\[(g , p) \mapsto g \bullet p.
\]
where $g \bullet p$ is defined in Definition \ref{actdef}.
The multiplication
\[m:(G_{j,k}^{(n)} \times W_{j,k}^{(n)} )_{s} \times_{W_{j,k}^{(n)} } {}_{t}(G_{j,k}^{(n)} \times W_{j,k}^{(n)} ) \to  G_{j,k}^{(n)} \times W_{j,k}^{(n)}
\]
 is given by
\[m \left((g_{1},g_{2} \bullet p), (g_{2},p) \right) = (g_{1} \bullet_{p} g_{2}, p)
\]
where $g_{1} \bullet_{p} g_{2}$ is defined in definition \ref{dotp}.

The identity section is defined by \[e(p) = ( \text{id},p)\] and the inverse is defined by \[\iota(g,p) = (g^{(-1)_{p}},g \bullet p).\]
The associativity condition (\ref{cond1}) follows from (\ref{concludethat}). The conditions (\ref{cond2}), (\ref{cond4}) and (\ref{cond3}) follow from   (\ref{really}), (\ref{ofcourse}), and (\ref{annoying}).

We get an inverse system $\mathcal{G}_{j,k}^{(\bullet)}$ in the category of groupoids in schemes:

\begin{equation}\label{groupoidinvlim}\cdots \to  \mathcal{G}_{j,k}^{(3)} \to \mathcal{G}_{j,k}^{(2)}  \to \mathcal{G}_{j,k}^{(1)} \to \mathcal{G}_{j,k}^{(0)}.
\end{equation}
and the inverse limit is $\widetilde{\mathcal{G}_{j,k}}= \mathcal{G}_{j,k}^{(\infty)}$.

\subsection{The morphism defined via the big bundle $\mathbb{E}$}

The big bundle $\mathbb{E}$ defines a morphism of stacks from $W_{j,k}^{(n)}$ to $\mathfrak{M}_{j}(Z_{k}^{(n)})$ as follows.  Given an affine scheme $T$, we have a map
\[\varphi_{T} \colon \Hom(T,W_{j,k}^{(n)}) \to \Hom(T,\mathfrak{M}_{j}(Z_{k}^{(n)}) )
\]
\[f \mapsto (\text{id}, f)^{*}\mathbb{E}
\]
given by sending $f$ to the pullback of $\mathbb{E}$ via the map  \[(\text{id}, f):Z_{k}^{(n)} \times T \to Z_{k}^{(n)} \times W_{j,k}^{(n)}.\]
\begin{lemma} Let $E$ be a vector bundle on $Z_{k}^{(n)} \times T$ such that $\pi_{T *}(E\otimes \mathcal{O}(j)) $ is generated by global sections and $R^{1}\pi_{T *} (E\otimes \mathcal{O}(j))=0$.  Then there exists a Zariski open cover $\{U_{i}\}$ of $T$ such that the restriction of $E$ to each  $Z_{k}^{(n)} \times U_{i}$ is an extension of $\mathcal{O}(j+ \text{deg}(E))$ by $\mathcal{O}(-j)$, where $\text{deg}(E)$ is the degree of $E$ along the $Z_{k}^{(n)}$ fibers.
\end{lemma}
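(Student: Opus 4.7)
The plan is to parallel the structure of Lemma \ref{reps}, with the role of the classification result from \cite{CA1} replaced by the global-generation hypothesis. I would first produce, locally on $T$, a section of $E \otimes \mathcal{O}(j)$ which is nowhere vanishing on $\ell$, then use it to build the desired short exact sequence, and finally identify the quotient via the see-saw principle.

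To begin, I would fix a point $t \in T(\mathbb{C})$. Since $R^{1}\pi_{T *}(E \otimes \mathcal{O}(j)) = 0$, cohomology and base change shows that $\pi_{T*}(E \otimes \mathcal{O}(j))$ is locally free and that its fiber at $t$ is canonically $H^{0}(Z_{k}^{(n)} \times \{t\}, E_{t} \otimes \mathcal{O}(j))$. Restricting $E_{t}$ to $\ell$ gives a splitting $\mathcal{O}(a) \oplus \mathcal{O}(b)$ with $a + b = \deg(E)$. I would then select a section $\sigma_{t}$ of $E_{t} \otimes \mathcal{O}(j)$ whose restriction to $\ell$ is nowhere vanishing. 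Existence of such a section follows from the surjectivity of $H^{0}(Z_{k}^{(n)}, E_{t} \otimes \mathcal{O}(j)) \to H^{0}(\ell, E_{t}|_{\ell} \otimes \mathcal{O}(j))$ (a consequence of the vanishing of $H^{1}$ of powers of $I_{\ell}$, since the successive quotients are line bundles on $\mathbb{P}^{1}$ with non-negative degree), combined with a numerical consequence of the global-generation hypothesis: there are enough sections of $\mathcal{O}(a+j) \oplus \mathcal{O}(b+j)$ to find a coprime pair, hence a section without common zero on $\mathbb{P}^{1}$.

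Having chosen $\sigma_{t}$, I would use the global generation of $\pi_{T*}(E \otimes \mathcal{O}(j))$ to lift it to a global section $\sigma \in H^{0}(T, \pi_{T*}(E \otimes \mathcal{O}(j))) = H^{0}(Z_{k}^{(n)} \times T, E \otimes \mathcal{O}(j))$ whose value at $t$ is $\sigma_{t}$; explicitly, if $\tau_{1}, \dots, \tau_{r}$ are global sections generating the fiber at $t$, one takes a $\mathbb{C}$-linear combination that realizes $\sigma_{t}$. The vanishing locus of $\sigma|_{\ell \times T}$ is closed and disjoint from $\ell \times \{t\}$; projecting to $T$ and shrinking to a Zariski open $U \ni t$, I may assume $\sigma$ is nowhere vanishing on $\ell \times U$. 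The argument of the Remark following Lemma \ref{reps} (using that $H^{1}(\ell \times U, I_{\ell}^{m}) = 0$ for $m > 0$) then shows that $\sigma$ is nowhere vanishing on all of $Z_{k}^{(n)} \times U$, so it defines an injection $\mathcal{O}(-j) \hookrightarrow E|_{Z_{k}^{(n)} \times U}$ of bundles.

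Finally, the cokernel $L$ is a line bundle on $Z_{k}^{(n)} \times U$ with $\det E|_{Z_{k}^{(n)} \times U} = \mathcal{O}(-j) \otimes L$, so $L$ restricts to $\mathcal{O}(j + \deg(E))$ on every fiber $Z_{k}^{(n)} \times \{t'\}$. By the see-saw principle, $L \cong \pi_{U}^{*}M \otimes \mathcal{O}(j+\deg(E))$ for some line bundle $M$ on $U$; covering $U$ by affine opens $\{U_{i}\}$ over which $M$ is trivial then produces the required cover. The most delicate step, I expect, is the construction of the nowhere-vanishing section on $\ell \times \{t\}$: it is a numerical statement about the splittings of $E_{t}|_{\ell}$, and one must check that the global-generation hypothesis is actually strong enough to furnish a coprime pair of sections in the relevant summands.
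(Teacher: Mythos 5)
Your proposal follows essentially the same route as the paper: use global generation to pick a section of $\pi_{T*}(E\otimes\mathcal{O}(j))$ that is nonzero at $t$, shrink $T$ so the corresponding section of $E\otimes\mathcal{O}(j)$ is nowhere vanishing, and identify the line-bundle quotient by a degree/see-saw argument. The one step you flag as delicate --- arranging that the chosen section is nowhere vanishing already on the fiber $Z_{k}^{(n)}\times\{t\}$, not merely nonzero as an element of $H^{0}$ --- is precisely the point the paper's own proof passes over silently (it simply asserts that the vanishing locus has the form $Z_{k}^{(n)}\times D$ with $t\notin D$), so your treatment is, if anything, the more careful of the two.
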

\begin{proof}
Choose a point $t \in T(\mathbb{C})$.  By assumption, there exists a global section
\[\sigma: \mathcal{O}_{T} \to \pi_{T *}(E\otimes \mathcal{O}(j))
\]
 of $\pi_{T *}(E\otimes \mathcal{O}(j)) $ which does not vanish at $t$.  Suppose that $\sigma$ vanishes along some divisor $D$ not containing $t$.  By adjointness, we can consider $\sigma$ as a map
\[\sigma: \mathcal{O} = \pi_{T}^{*}\mathcal{O}_{T} \to E\otimes\mathcal{O}(j)
\]
which vanishes along $Z_{k}^{(n)} \times D$.  Therefore, if we let $U' = T-D$ we have a short exact sequence
\[0 \to \mathcal{O}|_{Z_{k}^{(n)} \times U'} \to (E\otimes\mathcal{O}(j))|_{Z_{k}^{(n)} \times U'} \to \mathcal{S} \to 0.
\]
By considering the rank, $\mathcal{S}$ must be a line bundle on $Z_{k}^{(n)} \times U'$, and by shrinking $U'$ to some open set $U$ containing $t$ we may assume that this line bundle is a pullback from $Z_{k}^{(n)}$ and so it must be $\mathcal{O}(i)$ for some $i \in \mathbb{Z}$.  By consideration of the degree, we have $i = \text{deg}(E) + 2j$.   Tensoring with $\mathcal{O}(-j)$ we arrive at the required extension over $Z_{k}^{(n)} \times U$.
\end{proof}
%\box
\begin{lemma}
 The substacks \[\mathfrak{M}_{\leq j}(Z_{k}^{(n)})= \bigcup_{0\leq i \leq j} \mathfrak{M}_{i}(Z_{k}^{(n)})\] of $\mathfrak{M}(Z_{k}^{(n)})$ are given by
\[T \mapsto \left\{E \in \mathfrak{M}(Z_{k}^{(n)})(T)| \pi_{T *}(E\otimes \mathcal{O}(j)) \text{is generated by global sections and}\right. \]
\[ \left.\phantom{xxxxxxxxxxxxxXXXXXXXXXX} R^{1}\pi_{T *} (E\otimes \mathcal{O}(j))=0\right\}.
\]
\end{lemma}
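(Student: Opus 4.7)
The plan is to establish both containments for $E \in \mathfrak{M}(Z_k^{(n)})(T)$, noting that $\deg(E)=0$ since $E$ has vanishing first Chern class. The preceding lemma hands us the ``cohomological conditions $\Rightarrow$ splitting type $\leq j$'' direction essentially for free: it furnishes an open cover $\{U_i\}$ of $T$ over which $E$ fits in an extension $0 \to \mathcal{O}(-j) \to E|_{Z_k^{(n)} \times U_i} \to \mathcal{O}(j) \to 0$. Restricting this extension to a geometric fiber $\ell \times \{t\}$ yields a bundle injection $\mathcal{O}(-j) \hookrightarrow E_t|_\ell$ on $\mathbb{P}^1$. Writing $E_t|_\ell = \mathcal{O}(a(t)) \oplus \mathcal{O}(-a(t))$, such an injection forces $a(t) \leq j$: the map is described by sections of $\mathcal{O}(a(t)+j)$ and $\mathcal{O}(j-a(t))$ having no common zero, and if $a(t) > j$ the second summand has no sections at all while a lone section of $\mathcal{O}(a(t)+j)$ must vanish somewhere.

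For the opposite direction, suppose $E$ has splitting type $i(t) \leq j$ at every $t \in T(\mathbb{C})$. Both conditions are Zariski-local on $T$, so I would reduce to the case that $T$ is affine. The projection $\pi_T \colon Z_k^{(n)} \times T \to T$ is proper since $Z_k^{(n)}$ shares its underlying topological space with $\ell \cong \mathbb{P}^1$. To compute $H^1$ fiberwise, I filter $E_t \otimes \mathcal{O}(j)$ by the subsheaves $I_\ell^m(E_t \otimes \mathcal{O}(j))$ for $m = 0,\ldots, n+1$. Using $I_\ell^m/I_\ell^{m+1} \cong \mathcal{O}(mk)$ on $\ell$, the graded pieces are $E_t|_\ell \otimes \mathcal{O}(j+mk) \cong \mathcal{O}(j+mk+i(t)) \oplus \mathcal{O}(j+mk-i(t))$. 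For $i(t) \leq j$ and every $m \geq 0$ both summands have non-negative degree on $\mathbb{P}^1$, so their $H^1$ vanishes. Running the long exact sequence of cohomology inductively up the filtration yields $H^1(Z_k^{(n)}, E_t \otimes \mathcal{O}(j)) = 0$ at every geometric point, and then cohomology and base change (Grauert's theorem, applicable thanks to properness of $\pi_T$) promotes this to $R^1\pi_{T*}(E \otimes \mathcal{O}(j)) = 0$ globally on $T$.

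The same filtration computes $h^0(t) = (n+1)(2j+2+kn)$ independently of $i(t)$, so by base change $\pi_{T*}(E \otimes \mathcal{O}(j))$ is a locally free $\mathcal{O}_T$-module whose formation commutes with base change; on the affine $T$ any coherent module is globally generated, delivering the second condition. The main obstacle is keeping the filtration argument clean: one must confirm by induction on $m$ that each intermediate quotient $(E_t \otimes \mathcal{O}(j))/I_\ell^m(E_t \otimes \mathcal{O}(j))$ continues to have vanishing $H^1$, and that base change applies without pathology on the non-reduced thickening $Z_k^{(n)}$ — both of which follow from properness of $\pi_T$ and the coherence of $E \otimes \mathcal{O}(j)$.
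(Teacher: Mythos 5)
Your first direction is sound: given the two cohomological conditions, invoking the preceding lemma to get a local extension $0\to\mathcal{O}(-j)\to E\to\mathcal{O}(j)\to 0$ and then ruling out a nowhere-vanishing map $\mathcal{O}(-j)\hookrightarrow\mathcal{O}(a)\oplus\mathcal{O}(-a)$ for $a>j$ by the degree argument is correct, and is in fact more explicit than the paper's own proof, which merely asserts that $R^{1}\pi_{T*}(E\otimes\mathcal{O}(j))=0$ bounds the splitting type. Likewise your $I_{\ell}$-adic filtration computation of $H^{1}$ on fibers and the base-change step are fine for finite $n$. The genuine gap is your treatment of the global generation hypothesis. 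You read ``$\pi_{T*}(E\otimes\mathcal{O}(j))$ is generated by global sections'' as a condition on the $\mathcal{O}_{T}$-module alone and dismiss it as automatic on affine $T$. Under that reading the condition is vacuous (a substack is determined by its points on affines), so the lemma would assert that $R^{1}\pi_{T*}(E\otimes\mathcal{O}(j))=0$ by itself cuts out $\mathfrak{M}_{\leq j}$. That is false: take $E=\mathcal{O}(j+1)\oplus\mathcal{O}(-j-1)$, so that $E\otimes\mathcal{O}(j)=\mathcal{O}(2j+1)\oplus\mathcal{O}(-1)$; by your own filtration the graded pieces are $\mathcal{O}(2j+1+mk)\oplus\mathcal{O}(-1+mk)$, all of degree $\geq -1$, hence $H^{1}$ vanishes on every fiber, yet the splitting type is $j+1$. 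The condition that excludes this bundle is that $E\otimes\mathcal{O}(j)$ be generated by its relative global sections, i.e.\ that $\pi_{T}^{*}\pi_{T*}(E\otimes\mathcal{O}(j))\to E\otimes\mathcal{O}(j)$ be surjective, which fails for $\mathcal{O}(2j+1)\oplus\mathcal{O}(-1)$ along $\ell$. This stronger reading is also the one the preceding lemma needs in order to be true, so your proposal is internally inconsistent: you cannot both invoke that lemma and declare its hypothesis vacuous.

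To repair the converse direction you must show that splitting type $i(t)\leq j$ forces $E\otimes\mathcal{O}(j)$ to be fiberwise globally generated, and your filtration already does the work: $(E_{t}\otimes\mathcal{O}(j))|_{\ell}\cong\mathcal{O}(j+i)\oplus\mathcal{O}(j-i)$ is globally generated because both degrees are nonnegative; its sections lift to $Z_{k}^{(n)}$ since $H^{1}$ of each $I_{\ell}^{m}(E_{t}\otimes\mathcal{O}(j))$ vanishes (same graded pieces, now with $m\geq 1$); and generation along $\ell$ implies generation on the thickening by Nakayama. Adding this step, together with the base change you already set up, yields the intended statement; without it, the second condition has not been verified in the only form in which it is not redundant.
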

\begin{proof}
  By Serre's theorem, $\mathfrak{M}(Z_{k}^{(n)})$ is covered by these open substacks.
In order to show the Lemma we can work locally in the site, and show the equivalence using the prestacks $\text{pre-}[\mathcal{G}_{j,k}^{(n)}]$. First suppose that $E$ is an extension of $\mathcal{O}(j)$ by $\mathcal{O}(-j)$.  Then $E\otimes \mathcal{O}(j)$ is an extension of $\mathcal{O}(2j)$ by $\mathcal{O}$. The resulting sequence on global sections is exact.
Both of the line bundles
$\mathcal{O}(2j)$ and  $\mathcal{O}$ are generated by their global sections,
 and the fact that $\pi_{T *}(E\otimes \mathcal{O}(j))$ is generated by its global sections follows.  However, $H^{1}(Z_{k}^{(n)}, \mathcal{O}(a))$ vanishes for $a \geq 0$ and therefore $R^{1}\pi_{T *} (E(j))$ vanishes.  Conversely, suppose that  $\pi_{T *}(E\otimes \mathcal{O}(j))$ is generated by global sections and $R^{1}\pi_{T *} (E\otimes \mathcal{O}(j))=0$.  The latter condition implies that for every point $t$, the splitting type of the restriction of $E$ to $Z_{k}^{(n)} \times\{t\}$ is less than or equal to $j$.  Consider a non-zero global section of $E\otimes \mathcal{O}(j)$.  If we restrict it to $\ell \times \{t\}$ for any geometric point $t$ it is a section of the bundle $\mathcal{O}(j+i) \oplus \mathcal{O}(j-i)$ on $\mathbb{P}^{1}$ for some $i$ such that $0 \leq i\leq j$ and therefore does not vanish, therefore it does not vanish on $\ell \times T $ and therefore it does not vanish on $Z_{k}^{(n)} \times T$.  We therefore get a constant rank,  injective map of sheaves $\mathcal{O}_{Z_{k}^{(n)}  \times T} \to E\otimes \mathcal{O}(j)$, and  a short exact sequence
\[0 \to \mathcal{O}_{Z_{k}^{(n)} \times T }(-j) \to E  \to \mathcal{O}_{T \times Z_{k}^{(n)} }(j) \to 0 \text{.}
\]
\end{proof}

\subsection{The universal bundle $\tilde{\mathcal{E}}$}\label{UniversalBundle}
We now construct the universal bundle on the groupoid 
\[Z_{k}^{(n)} \times \mathcal{G}_{j,k}^{(n)}.\]  The groupoid in question has atlas $Z_{k}^{(n)} \times W_{j,k}^{(n)}$ and relations $Z_{k}^{(n)} \times G_{j,k}^{(n)} \times W_{j,k}^{(n)}$.   We use the description of sheaves on groupoids in schemes given in subsection \ref{RevGrp}.  We start with the big bundle $\mathbb{E}$ on $Z_{k}^{(n)} \times W_{j,k}^{(n)}$ which was defined in Definition \ref{bigbundle}.
Consider the map in 
\[\Isom_{Z_{k}^{(n)} \times G_{j,k}^{(n)} \times W_{j,k}^{(n)}}((\text{id}_{Z_{k}^{(n)}},t)^{*} \mathbb{E}, (\text{id}_{Z_{k}^{(n)}},s)^{*} \mathbb{E})
\]
given by the pair
\[(A_{g}(p), B_{g}(p)) \in \Aut\left(U^{(n)} \times G_{j,k}^{(n)} \times W_{j,k}^{(n)}, \mathcal{O}^{\oplus 2} \right) \times \Aut \left(V^{(n)} \times G_{j,k}^{(n)} \times W_{j,k}^{(n)}, \mathcal{O}^{\oplus 2} \right)\]
which was defined in equations (\ref{Adef}) and (\ref{Bdef}).
We need to consider the pullbacks of the isomorphism to 
\[Z_{k}^{(n)} \times (G_{j,k}^{(n)} \times W_{j,k}^{(n)})_{s} \times_{W_{j,k}^{(n)}} {}_{t}(G_{j,k}^{(n)} \times W_{j,k}^{(n)})
\]
via the maps 
\[(\text{id}_{Z_{k}^{(n)}} ,m),(\text{id}_{Z_{k}^{(n)}} ,p_{1}),(\text{id}_{Z_{k}^{(n)}} ,p_{2})
\]
where $m,p_{1},p_{2}$ are the maps
\[(G_{j,k}^{(n)} \times W_{j,k}^{(n)})_{s} \times_{W_{j,k}^{(n)}} {}_{t}(G_{j,k}^{(n)} \times W_{j,k}^{(n)})\to G_{j,k}^{(n)}  \times W_{j,k}^{(n)}
\]
given by
\[m\left((g_{1},g_{2} \bullet p),(g_{2},p)\right)= (g_{1} \bullet_{p} g_{2},p)
\]
\[p_{1}\left((g_{1},g_{2} \bullet p),(g_{2},p)\right)= (g_{1},g_{2} \bullet p)
\]
and
\[p_{2}\left((g_{1},g_{2}\bullet p),(g_{2},p)\right)=(g_{2},p).
\]
 These pullbacks are described by the pairs of elements of
\[\Aut \left(U^{(n)} \times (G_{j,k}^{(n)} \times W_{j,k}^{(n)})_{s} \times_{W_{j,k}^{(n)}}  {}_{t}(G_{j,k}^{(n)} \times W_{j,k}^{(n)}) , \mathcal{O}^{\oplus 2} \right)
\]
and
\[\Aut \left(V^{(n)} \times (G_{j,k}^{(n)} \times W_{j,k}^{(n)})_{s} \times_{W_{j,k}^{(n)}}  {}_{t}(G_{j,k}^{(n)} \times W_{j,k}^{(n)}) , \mathcal{O}^{\oplus 2} \right)
\]
given by \[
(A_{g_{1} \bullet_{p} g_{2}}(p), B_{g_{1} \bullet_{p} g_{2}}(p)),\]
\[(A_{g_{1}}(g_{2} \bullet p), B_{g_{1}}( g_{2} \bullet p)),\] and \[(A_{g_{2}}(p), B_{g_{2}}( p))\] respectively.
Therefore identity (\ref{isomcond1}) follows from (\ref{defGrpStr}) while (\ref{isomcond2}) follows from (\ref{idenprop}) and 
consequently we have defined a vector bundle on the groupoid in accordance with the description in \ref{RevGrp}.

\subsection{The equivalence of stacks}

Let us first mention groupoid presentations in the case of line bundles.

The stack of line bundles on the $Z_{k}^{(n)}$ is equivalent to
\[\mathbb{Z} \times [\bullet/\mathcal{O}(Z_{k}^{(n)})^{\times}].
\]
For example when $k=1$, $n=\infty$ this stack is  equivalent to \[\mathbb{Z} \times [\bullet/\mathbb{C}[[x,y]]^{\times}].
\]
In section \ref{DefOfGrpd} we defined a groupoid in schemes
\[\mathcal{G}_{j,k}^{(n)}= (G_{j,k}^{(n)} \times W_{j,k}^{(n)},W_{j,k}^{(n)},m,e,\iota),
\] the associated pre-stack $\text{pre-}[\mathcal{G}_{j,k}^{(n)}]$ and the associated stack $[\mathcal{G}_{j,k}^{(n)}]$ on the fppf site.

\begin{theorem}
The natural map $W_{j,k}^{(n)} \to \mathfrak{M}_{j}(Z_{k}^{(n)})$ given by the big bundle $\mathbb{E}$ which was defined in Definition \ref{bigbundle}
 induces an isomorphism of stacks
\[[\mathcal{G}_{j,k}^{(n)}] \cong \mathfrak{M}_{j}(Z_{k}^{(n)}).
\]
Furthermore, there is a vector bundle
$$\begin{array}{c}
 [\tilde{\mathcal{E}}] \\
\downarrow  \\
Z^{(n)}_{k}\times [\mathcal{G}_{j,k}^{(n)}]
\end{array}$$
whose pullback to $Z^{(n)}_{k} \times W_{j,k}^{(n)}$ is the big bundle $\mathbb{E}$, and is identified via the above isomorphism with the universal bundle $\mathcal{E}$ on $Z^{(n)}_{k} \times \mathfrak{M}_{j}(Z_{k}^{(n)})$.
\end{theorem}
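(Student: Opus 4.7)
My plan is to exploit the universal property of the stackification of a groupoid: a morphism out of $[\mathcal{G}_{j,k}^{(n)}]$ to a stack $\mathfrak{X}$ is the same as a morphism of prestacks from $\text{pre-}[\mathcal{G}_{j,k}^{(n)}]$ to $\mathfrak{X}$, which in turn amounts to giving, for each affine test scheme $T$, a functor of groupoids $\Hom(T, \mathcal{G}_{j,k}^{(n)}) \to \Hom(T, \mathfrak{X})$ that is natural in $T$. First, I would package the constructions of Section 2 into exactly such a functor. On objects, send $p \in \Hom(T, W_{j,k}^{(n)})$ to the bundle $E_p = (\text{id}_{Z_k^{(n)}}, p)^{*}\mathbb{E}$, which by Lemma \ref{format} lies in $\mathfrak{M}_{j}(Z_{k}^{(n)})(T)$. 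On morphisms, send $(g,p) \in \Hom(T, G_{j,k}^{(n)} \times W_{j,k}^{(n)})$ to the isomorphism $E_p \to E_{g\bullet p}$ determined by the pair $(A_g(p), B_g(p))$ from equations (\ref{Adef})--(\ref{Bdef}); relation (\ref{provide}) guarantees this really is an isomorphism of the claimed bundles. Compatibility with composition and units is precisely identities (\ref{defGrpStr}), (\ref{really}), (\ref{ofcourse}), and (\ref{annoying}), which were proved in Definition \ref{dotp} and the paragraph following it. This yields the morphism of stacks $\Phi \colon [\mathcal{G}_{j,k}^{(n)}] \to \mathfrak{M}_{j}(Z_{k}^{(n)})$.

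Next, I would verify that $\Phi$ is an isomorphism of stacks by checking that it is fully faithful and locally (for the fppf topology) essentially surjective. For \emph{essential surjectivity}, given any rank $2$ bundle $E$ on $Z_{k}^{(n)} \times T$ of constant splitting type $j$ and first Chern class $0$, Lemma \ref{reps} produces a Zariski open cover of $T$ on which $E$ admits the structure of an extension of $\mathcal{O}(j)$ by $\mathcal{O}(-j)$ splitting on $\ell \times T$, and Lemma \ref{format} identifies this extension data with a morphism $p \colon T \to W_{j,k}^{(n)}$, so that $E \cong E_p$ locally on $T$; this is the statement that $\Phi$ is essentially surjective in the Zariski (hence fppf) topology. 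For \emph{full faithfulness}, given $p, p' \in \Hom(T, W_{j,k}^{(n)})$, I must show that the map
\begin{equation*}
\{g \in \Hom(T, G_{j,k}^{(n)}) \mid g \bullet p = p'\} \longrightarrow \Isom_{Z_{k}^{(n)} \times T}(E_p, E_{p'})
\end{equation*}
is a bijection. Surjectivity is exactly Lemma \ref{iso}: every isomorphism $(A,B)$ is determined by a matrix $M = (A, B) - (\Phi_U(M),\Phi_V(M))$ lying in $\Aut_{Z_{k}^{(n)} \times T}(\mathcal{O}(j) \oplus \mathcal{O}(-j))$, which yields an element $g \in G_{j,k}^{(n)}(T)$ satisfying (\ref{orthis}), i.e.\ $g \bullet p = p'$. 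Injectivity also comes from Lemma \ref{iso}, which asserts that $M$ is \emph{uniquely} determined by the isomorphism $(A,B)$.

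Finally, for the universal bundle, I would invoke the equivalence (\ref{CohEquiv}) between coherent sheaves on the groupoid $Z_{k}^{(n)} \times \mathcal{G}_{j,k}^{(n)}$ and coherent sheaves on $Z_{k}^{(n)} \times [\mathcal{G}_{j,k}^{(n)}]$. Subsection \ref{UniversalBundle} already constructed a sheaf on the groupoid, namely the big bundle $\mathbb{E}$ on the atlas $Z_{k}^{(n)} \times W_{j,k}^{(n)}$ together with the isomorphism $(A_g(p), B_g(p)) \colon (\text{id},t)^{*}\mathbb{E} \to (\text{id},s)^{*}\mathbb{E}$ over the relation scheme, where the cocycle conditions (\ref{isomcond1}) and (\ref{isomcond2}) are furnished by (\ref{defGrpStr}) and (\ref{idenprop}). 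This produces the required bundle $[\tilde{\mathcal{E}}]$ with the correct pullback to the atlas. Its identification with the tautological universal bundle $\mathcal{E}$ on $Z_{k}^{(n)} \times \mathfrak{M}_{j}(Z_{k}^{(n)})$ is then forced by how $\Phi$ was defined on objects: the pullback of $\mathcal{E}$ along $\Phi$ classifies the same family over the atlas, namely $\mathbb{E}$.

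The step I expect to be the main obstacle is essential surjectivity, because it is genuinely a \emph{local} statement on $T$ rather than a pointwise one; one has to combine the pointwise splitting given by \cite{CA1} with the deformation argument of Lemma \ref{reps} (using the see-saw principle) to move from the fiber over a single point $s \in T(\mathbb{C})$ to an honest open neighborhood. Everything else is a bookkeeping exercise involving the identities already assembled in Section 2.
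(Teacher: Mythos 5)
Your proposal is correct and follows essentially the same route as the paper: define the morphism at the prestack level using $\mathbb{E}$ and the pairs $(A_g(p),B_g(p))$, verify local essential surjectivity via the extension structure of Lemmas \ref{reps} and \ref{format} (the paper phrases this through the Leray computation (\ref{LerayPres}), which is what underlies Lemma \ref{format}), and obtain full faithfulness from Lemma \ref{iso}, with the universal bundle coming from the groupoid sheaf of subsection \ref{UniversalBundle} under the equivalence (\ref{CohEquiv}). The only difference is expository emphasis, not substance.
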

Here,
$[\mathcal{G}_{j,k}^{(n)}]$ is the stack associated to the groupoid
$\mathcal{G}_{j,k}^{(n)}$.  This association is reviewed in \ref{Grpds2Stacks}.

\begin{proof}
We will prove this theorem by first defining an morphism of stacks over the fppf site and then show that it is locally in the site an equivalence of categories.
Conisder the morphism of pre-stacks
\[\text{pre-}F: \text{pre-}[\mathcal{G}_{j,k}^{(n)}] \to \mathfrak{M}_{j}(Z_{k}^{(n)})
\]
by
\[\text{pre-}F_{T}(f)= 
(\text{id}_{Z_{k}^{(n)}},f)^{*}\tilde{\mathcal{E}}
\]
where $f$ is a morphism of groupoids from $T$ to $\mathcal{G}_{j,k}^{(n)}$.
Because $\mathfrak{M}_{j}(Z_{k}^{(n)})$ is already a stack over the fppf site, we get for free a morphism of the associated stacks  over the fppf site
\[F: [\mathcal{G}_{j,k}^{(n)}] \to \mathfrak{M}_{j}(Z_{k}^{(n)}).
\]
In order to show this is an equivalence we need only to show that it is locally an isomorphism.
Consider a vector bundle $E$ on $Z^{(n)}_{k} \times T$ for an affine $\mathbb{C}$ sheme $T$ and write it somehow (it does not matter how) as an extension of $\mathcal{O}(j)$ by $\mathcal{O}(-j)$ possibly after renaming $T$. Then by flat base change for the diagram
\[
\xymatrix{
Z^{(n)}_{k}\times T \ar[r]^{\pi_{T}} \ar[d]^{\pi_{Z^{(n)}_{k}}} & T \ar[d] \\
Z^{(n)}_{k} \ar[r] & \{ \cdot \}}
\]
and the Leray spectral sequence for $\pi_{Z^{(n)}_{k}}$ we have
\begin{equation} \label{LerayPres}
\begin{split}
& \Ext^{1}_{Z^{(n)}_{k} \times T}(\pi_{Z^{(n)}_{k}}^{*}\mathcal{O}(j), \pi_{Z^{(n)}_{k}}^{*}\mathcal{O}(-j)) = H^{1}(Z^{(n)}_{k} \times T, \pi_{Z^{(n)}_{k}}^{*}(\mathcal{O}(-2j)))
\\ &= H^{0}(T,R^{1}\pi_{T*} \pi_{Z^{(n)}_{k}}^{*}(\mathcal{O}(-2j))) = H^{0}(T,\mathcal{O}_{T} \otimes H^{1}(Z^{(n)}_{k},\mathcal{O}(-2j))) \\ &=
H^{0}(T,\mathcal{O}_{T} \otimes\ Ext^{1}_{Z^{(n)}_{k}}(\mathcal{O}(j),\mathcal{O}(-j))) = \Hom(T, W_{j,k}^{(n)}).
\end{split}
\end{equation}
We can conclude that choosing (locally in the test schemes) the structure of an extension gives maps from $T$ to the atlass of $\mathcal{G}_{j,k}^{(n)}$.  It remains to show that the ambiguity in such choices is given by maps from $T$ to the relations of $\mathcal{G}_{j,k}^{(n)}$.  Suppose we have two maps $p$ and $p'$ from $T$ to $W_{j,k}^{(n)}$.  We need to show that
  \[
 \Isom_{[\mathcal{G}_{j,k}^{(n)}](T)}(p,p') \cong \Isom_{Z^{(n)}_{k} \times T}((\text{id}_{Z^{(n)}_{k}},p)^{*}\mathbb{E} ,(\text{id}_{Z^{(n)}_{k}},p')^{*} \mathbb{E}).
 \]
We have already naturally identified these two sets in Lemma \ref{iso}.
% The universal bundle will be defined in Section \ref{UniversalBundle}.
\end{proof}

We can use some easy observations about the explicit presentation we have established to give some properties of the stacks $\mathfrak{M}_{j}(Z^{(n)}_{k})$.  First of all $G_{j,k}^{(n)}$ and $W^{(n)}_{j,k}$ are reduced, irreducible, affine algebraic varieties.  Notice that $s$ is a projection and the map $t$ factors as a Zariski open embedding followed by a projection
\[
\xymatrix{
R=G_{j,k}^{(n)} \times W_{j,k}^{(n)}  \ar[r]^{} \ar[rd]^{t} & \mathbb{E}\text{nd}_{Z_{k}^{(n)}}(\mathcal{O}(j) \oplus \mathcal{O}(-j)) \times W_{j,k}^{(n)} \ar[d] \\
 & A= W_{j,k}^{(n)}.}
\]
where the horizontal map is
\[(g,p) \mapsto (g,gp).
\]
The following could be concluded from the general construction of these stacks of vector bundles using Quot schemes due to Laumon and Moret-Bailly but we can give here a direct proof.
\begin{corollary}For every finite $n$, the stack $\mathfrak{M}_{j}(Z^{(n)}_{k})$ is an Artin stack.
\end{corollary}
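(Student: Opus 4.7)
The plan is to exploit the explicit presentation $[\mathcal{G}_{j,k}^{(n)}] \cong \mathfrak{M}_{j}(Z_{k}^{(n)})$ established in the preceding theorem and check directly that the groupoid in schemes $\mathcal{G}_{j,k}^{(n)}$ satisfies the standard criterion for its associated quotient stack to be Artin: namely, that the source and target maps are smooth morphisms between finite-type $\mathbb{C}$-schemes, and that the diagonal is representable (cf.\ \cite{Lau}).

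First I would verify the two finite-type conditions. For finite $n$, the atlas $W_{j,k}^{(n)} = \Spec(\mathbb{C}[p_{i,l} \mid (i,l) \in N_{j,k}^{(n)}])$ is by construction a finite-dimensional affine space, hence a smooth finite-type scheme. The relations scheme $R = G_{j,k}^{(n)} \times W_{j,k}^{(n)}$ is also of finite type because $G_{j,k}^{(n)}$ was defined as the open complement of the vanishing locus of $\det_{0}$ inside the finite-dimensional affine space $\mathbb{E}\text{nd}_{Z_{k}^{(n)}}(\mathcal{O}(j) \oplus \mathcal{O}(-j))$, and is therefore itself smooth and of finite type.

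Next I would address the smoothness of the structure maps. The source $s$ is manifestly smooth, being the projection from a product of smooth varieties. For the target $t$, I would invoke the factorization recorded in the diagram immediately above the corollary: $t$ is the composition of the Zariski open immersion $(g,p) \mapsto (g, g\bullet p)$ into $\mathbb{E}\text{nd}_{Z_{k}^{(n)}}(\mathcal{O}(j) \oplus \mathcal{O}(-j)) \times W_{j,k}^{(n)}$ with the projection onto the second factor. Both of these are smooth, so $t$ is smooth.

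Once smoothness of $s$ and $t$ is in hand, the conclusion is essentially mechanical: $W_{j,k}^{(n)} \to [\mathcal{G}_{j,k}^{(n)}]$ is a smooth surjective atlas by a finite-type scheme, and the diagonal $[\mathcal{G}_{j,k}^{(n)}] \to [\mathcal{G}_{j,k}^{(n)}] \times [\mathcal{G}_{j,k}^{(n)}]$ is representable by finite-type affine schemes, since its fibers are the isomorphism schemes $\Isom(E_{p}, E_{p'})$, realized as closed subschemes of $G_{j,k}^{(n)}$ cut out by the linear condition (\ref{orthis}). The only step with any subtlety is the smoothness of the target $t$, which could have appeared mysterious given its explicit Möbius-like formula; however, the factorization already displayed in the paper reduces it at once to an open immersion followed by a projection, so no further computation is required.
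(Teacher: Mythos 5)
Your proof is correct and follows essentially the same route as the paper: both arguments use the groupoid presentation $[\mathcal{G}_{j,k}^{(n)}]\cong\mathfrak{M}_{j}(Z_{k}^{(n)})$, deduce smoothness of $s$ and $t$ from the factorization of $t$ as an open immersion followed by a projection, and then verify the diagonal condition of Laumon's criterion. The only (cosmetic) difference is that the paper checks separatedness of $(s,t)\colon R\to A\times A$ by exhibiting the induced diagonal as a closed intersection inside $G_{j,k}^{(n)}\times G_{j,k}^{(n)}\times W_{j,k}^{(n)}$, whereas you observe that the fibers of the diagonal are the affine $\Isom$-schemes cut out in $G_{j,k}^{(n)}$ by condition (\ref{orthis}); these amount to the same verification.
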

\begin{proof}
When $n$ is finite then  $G_{j,k}^{(n)}$ and $W^{(n)}_{j,k}$ are smooth affine varieties of finite type.  By \cite{Lau} cor. 4.7, in order to conclude that it is an Artin stack, we need to show that $s$ and $t$ are flat and that the morphism
\[(s,t): R \to A \times A
\]
is separated and quasi-compact.  Since $n$ is finite, $s$ and $t$ are in fact smooth and therefore certainly flat.  Quasi-compactness is obvious since $R$ is quasi-compact.   To see that $(s,t)$ is separated we need to see that the induced diagonal
\begin{equation}\label{indDiag}
R \to R _{(s,t)}\times_{A \times A} {}_{(s,t)}R
\end{equation}
is closed.  Notice that $ R _{(s,t)}\times_{A \times A} {}_{(s,t)}R$ is a closed subvariety of
\[G_{j,k}^{(n)} \times G_{j,k}^{(n)} \times W^{(n)}_{j,k}
\] defined by the equation
\[g_{1}p=g_{2}p.
\] The image of the diagonal (\ref{indDiag}) is therefore closed, being just the intersection inside
\[G_{j,k}^{(n)} \times G_{j,k}^{(n)} \times W^{(n)}_{j,k}
\] of
\[R _{(s,t)}\times_{A \times A} {}_{(s,t)}R\] with the closed subvariety
\[\Delta_{G^{(n)}_{j,k}} \times W^{(n)}_{j,k}.\]
where 
\[\Delta_{G^{(n)}_{j,k}} \subset G^{(n)}_{j,k} \times G^{(n)}_{j,k}
\]
is the diagonal.
\end{proof}

\section{Applications}
In a forthcoming article \cite{BeG} we will use these groupoid presentations to calculate the space of deformations of the moduli stacks $\mathfrak{M}_{j}(Z_{k}^{(n)})$.  To do this one must calculate the cohomology of the tangent complex (thought of as a complex of coherent sheaves) on these stacks.  We then consider deformations of the $Z_{k}^{(n)}$.  These include both classical and non-commutative deformations of the type considered in \cite{BeBlPa2007} and \cite{To}.  By considering stacks of vector bundles over universal families of these deformations we get natural deformations of the stacks $\mathfrak{M}_{j}(Z_{k}^{(n)})$.  We investigate the corresponding map from deformations of  $Z_{k}^{(n)}$ to deformations of  $\mathfrak{M}_{j}(Z_{k}^{(n)})$.  This map is neither injective nor surjective.  Such maps are well understood for the case of curves (see for example \cite{NR}); whereas for surfaces such maps are only understood in a few special cases, such as  Mukai's \cite{Mu} description for the case of $K3$ surfaces.  In general such maps are quite mysterious for the case of surfaces. Thus, it is interesting to look at the question in the intermediate case of formal neighborhoods of curves inside surfaces.

Consider a proper algebraic surface $X$ over $\mathbb{C}$.   By attaching the stacks $\mathfrak{M}_{j}(\widetilde{Z_{k}})$ to $\mathfrak{M}(X)$ in the correct way one gets certain substacks $\mathfrak{M}_{j}(Y)$ of the stack of vector bundles on the blow up of $X$ at some point.  Consider the punctured space $Z_{k}^{\circ} = Z_{k}-\ell$ and the punctured formal neighborhood $\widetilde{Z_{k}}^{\circ}$ which is defined by 
\[\widetilde{Z_{k}}^{\circ} = \widetilde{Z_{k}} \times_{Z_{k}}  Z_{k}^{\circ}.
\] 
If $Y$ is any algebraic surface containing a rational curve $\ell$ with $\ell^{2}=-k$, $k>0$ then let $Y^{\circ}=Y-\ell$ and consider the commutative diagram
 \[
\xymatrix{
 & Y   &  \\
Y^{\circ} \ar[ur] & & \widetilde{Z_{k}} \ar[ul] \\
& \widetilde{Z_{k}}^{\circ}  \ar[ul] \ar[ur]  &
}.
\] 
Let $\mathfrak{M}(Y)$ be the stack of all vector bundles of rank $2$ whose restriction to $\ell$ has first Chern class zero, while  $\mathfrak{M}(Y^{\circ})$ and $\mathfrak{M}(\widetilde{Z_{k}}^{\circ})$ are the stacks of all vector bundles of rank $2$ on $Y^{\circ}$ and $\widetilde{Z_{k}}^{\circ}$ respectively. 
By taking stacks of vector bundles and using the main theorem of \cite{BL2}, we get a fiber product diagram of stacks along with the substacks of splitting type $j$,
 \[
\xymatrix{
 & \mathfrak{M}_{}(Y)   \ar[dr] \ar[dl] &   \mathfrak{M}_{j}(Y) \ar[l] \ar[dr]  & \\
\mathfrak{M}(Y^{\circ}) \ar[dr]  \ar@/^1pc/@{.>}[ur]  & & \mathfrak{M}_{}(\widetilde{Z_{k}}) \ar[dl] & \mathfrak{M}_{j}(\widetilde{Z_{k}}) \ar[l]   \\
& \mathfrak{M}(\widetilde{Z_{k}}^{\circ})  \ar@/^1pc/@{.>}[ur]   & &
}
\]
consisting of the above diagram with the solid arrows only.    The dotted curved arrows going up here exist only in the case that $k=1$ and when the image of $\ell$ is a smooth point.  Suppose we are in this case and $\pi:Y \to X$ is the contraction of $\ell$.  Then the dotted arrows are sections of the arrows in the opposite direction and are given by extending a bundle from $Y^{\circ}=Y-\ell \cong X - \{x\}$ to a bundle in $\mathfrak{M}(X)$ by taking the double dual of its pushforward and then pulling back the bundle via $\pi$ to $Y$ (and similarly on the other side).  This diagram is an algebraic version of the holomorphic patching construction used in \cite{ADV} and can be used to get information about the relationship of $\mathfrak{M}_{j}(Y)$ and $\mathfrak{M}(Y-\ell)$ from the relationship of $\mathfrak{M}_{j}(\widetilde{Z_{k}})$  and $\mathfrak{M}(\widetilde{Z_{k}}^{\circ})$.  This version of patching using stacks is a much more powerful construction, in particular avoiding all-together the use of framings, hence eliminating the unnecessarily complicated issues of infinite dimensionality of the space of reframings of each individual bundle.  In this article we have focused on a description of  $\mathfrak{M}_{j}(\widetilde{Z_{k}})$.  The application to topological information will appear in a forthcoming article \cite{B2} where we use the groupoid presentation to compute homology, cohomology and homotopy groups of the stacks of bundles.

Another reason why using stacks of bundles is preferable for gluing purposes over the construction via framings is that framings (in the sense of trivialising sections) simply do not exist in general. For the case of a surface with a $-1$ line it turns out possible to add framings to all holomorphic bundles, that is, every bundle on $Z_1$ is trivial on $Z^{\circ}_1$, so one can consider pairs of bundles together with framings, and glue by identifying framings. However, for elements of ${\mathfrak M}_j(Z_k)$ only those satisfying $j=0 \mod k$ are trivial on $Z_k^\circ$. This argument becomes even more relevant if one considers curves inside threefolds. For instance over the resolved conifold 
$W_1=\Tot({\mathcal O}(-1)\oplus {\mathcal O}(-1)$
we can consider also rank 2 bundles with splitting $(j,-j)$ and define  stacks 
${\mathfrak M}_j(W_1)$ but here only the trivial bundle is frameable in the sense of \cite{ADV}.

\appendix
\section{Some cohomology groups}

%In this appendix, we compute $H^{0}(\widetilde{Z_k},\mathcal{O}(s))$.
%We first recall that the coordinate ring of $Z_k$ is given by
%\[ \mathcal{O}(Z_{k})={\mathbb C}[x_0,x_1, \dotsc, x_k] \bigl/
%\sum_{i=0}^{k-2} \sum_{j=i+2}^{k}\bigl(x_i x_j - x_{i+1} x_{j-1} \bigr)
% \text{ ,} \]
The ring of global functions on $\widetilde{Z_{k}}$ is
\[ \mathcal{O}(\widetilde{Z_{k}})={\mathbb C}[[x_0,x_1, \dotsc, x_k]] \bigl/\sum_{i=0}^{k-2} \sum_{j=i+2}^{k}\bigl(x_i x_j - x_{i+1} x_{j-1} \bigr)
 \text{ ,} \]
and for $Z_{k}^{(n)}$ one gets $\mathcal{O}(Z_{k}^{(n)})= \mathcal{O}(Z_{k})/m^{n+1}$ where $m$ is the ideal $(x_{0},\dots, x_{k})$.  Note that here $x_i = z^i u$ in terms of the original coordinates on $U$ and $U^{(n)}$.
%The contraction maps $\pi \colon Z_k \to X_k$ and
 %\pi \colon \widetilde{Z_k} \to \widetilde{X_k}$ are  given in
%$(z,u)$-coordinates by $x_i = z^i u$.
The zeroth cohomology is the torsion-free $\mathcal{O}(\widetilde{Z_{k}})$ module
\[H^{0}(\widetilde{Z_{k}},\mathcal{O}(s)) = \bigoplus_{ki+s-l \geq 0,l \geq 0,i \geq 0} \mathbb{C} z^{l}u^{i} \subset \mathcal{O}(\widetilde{U}).
\]
Similarly, we have the $\mathcal{O}(Z_{k}^{(n)})$ module 
\[H^{0}(Z_{k}^{(n)},\mathcal{O}(s)) = \bigoplus_{ki+s-l \geq 0,l \geq 0,n>i \geq 0} \mathbb{C} z^{l}u^{i} \subset \mathcal{O}(U^{(n)}).
\]

\begin{remark}\label{spectrum}
This is the $\mathbb{C}$ points of the spectrum of the polynomial algebra freely generated over $\mathbb{C}$ by variables indexed by pairs $(l,i)$ such that $ki+s-l \geq 0,l \geq 0,n>i \geq 0$.  We use that algebraic structure.
\end{remark}
It is also easy to see that $H^{1}(Z_{k}^{(n)},\mathcal{O}(s))$ vanishes for $s \geq 0$.

\section{The cohomology  spectral sequence of $\mathcal{H}om(E,F)$}\label{spectral}
Consider a scheme $Z$ covered by just two affine open sets $U_{1}$ and $U_{2}$ and two rank $2$ vector bundles $E$ and $F$ on $Z$ which trivialize on the $U_{i}$. Assume also that $H^{1}(Z, \mathcal{O})=0$.  The \v{C}ech complex for computing the cohomology of $\mathcal{H}om(E,F)$ on $Z$ looks like 
\[\Hom_{U_{1}}(E|_{U_{1}},F|_{U_{1}}) \oplus \Hom_{U_{2}}(E|_{U_{2}},F|_{U_{2}}) \to \Hom_{U_{1} \cap U_{2}}(E|_{U_{1} \cap U_{2}},F|_{U_{1} \cap U_{2}}).
\]
If we choose local trivializations for $E|_{U_{1}}, E|_{U_{2}}$ and $F|_{U_{1}},F|_{U_{2}}$ then the complex becomes 
\[\Hom_{U_{1}}(\mathcal{O}^{\oplus 2},\mathcal{O}^{\oplus 2}) \oplus \Hom_{U_{2}}(\mathcal{O}^{\oplus 2},\mathcal{O}^{\oplus 2}) \to \Hom_{U_{1} \cap U_{2}}(\mathcal{O}^{\oplus 2},\mathcal{O}^{\oplus 2})
\]
with differential 
\[(A,B) \mapsto G_{E}A-BG_{F}
\]
where $G_{E},G_{F}$ are the transition matrices of $E$ and $F$.  On the other hand suppose we know that $E$ and $F$ can be written on $Z$ as extensions of line bundles $L_{2}$ by $L_{1}$.   By choosing 
 local splittings the \v{C}ech complex becomes 
\[\End_{U_{1}}(\mathcal{O}^{\oplus 2}) \oplus \End_{U_{2}}(\mathcal{O}^{\oplus 2}) \stackrel{D_{1}} \to \End_{U_{1} \cap U_{2}}(\mathcal{O}^{\oplus 2})
\]
\[D_{1}(N_{1},N_{2})= \left(\begin{matrix} g_{1}& 0  \cr 0 & g_{2} \cr\end{matrix}\right)N_{1}-N_{2}\left(\begin{matrix} g_{1}& 0  \cr 0 & g_{2} \cr\end{matrix}\right),
\]
\[D_{2}(M_{1},M_{2}) = \left(\begin{matrix} g_{1}& p_{E}  \cr 0 & g_{2} \cr\end{matrix}\right)M_{1}-M_{2}\left(\begin{matrix} g_{1}& p_{F}  \cr 0 & g_{2} \cr\end{matrix}\right).
\]
\[\ker (D_{1}) \stackrel{\overline{D_{2}}}{\longrightarrow} \coker (D_{1})
\]

Let us compute the cohomology groups \[\ker(\overline{D_{2}}) =\Hom (E,F) \cong H^{0}(X,\mathcal{H}om(E,F))\] and \[\coker(\overline{D_{2}})= \Ext^{1}(E,F) \cong  H^{1}(X,\mathcal{H}om(E,F))\] in terms of the extension and cohomology groups of the $L_{i}$.  
The  filtration on $\mathcal{H}om(E,F)$ reads
\[0 \subset \mathcal{H}om(L_{2},L_{1})  \subset\mathcal{H}om(E,L_{1})  + \mathcal{H}om(L_{2},F)  \subset \mathcal{H}om(E,F)
\]
with associated graded pieces  $\mathcal{H}om(L_{2},L_{1}) $,  $\mathcal{E}nd(L_{1}) \oplus\mathcal{E}nd(L_{2})$, and $\mathcal{H}om(L_{1},L_{2}) $.
The associated spectral sequence computing the cohomology $\mathcal{H}om(E,F)$ has an $E_{1}$ term which looks like
\def\csg#1{\save[].[dddddrrr]!C*+<5pc,0pc>[F-,]\frm{}\restore}
\[
\xymatrix@=.4pc{
q=2 & \csg1
\vdots &
\vdots &
\vdots & \vdots   \\
q=1 &
\vdots  &
%\ar[drr]
\vdots &
\vdots & \vdots  \\
q= 0 & \Hom(L_{1},L_{2})  &
 0  &  \vdots
 & 0
  \\
q=-1 &    &
\End(L_{1}) \oplus \End(L_{2})   &
\Ext^{1}(L_{2}, L_{1})&0    \\
q=-2 &
&   &
\Hom(L_{2},L_{1}) & 0  \\
q=-3 & & & &0  \\
& p= 0 & p= 1 & p=2 & p=3
}
\]

The $E_{2}$ term  looks like
\def\csg#1{\save[].[dddddrrr]!C*+<5pc,0pc>[F-,]\frm{}\restore}
\[
\xymatrix@=.7pc{
q=2 & \csg1
\vdots &
\vdots &
\vdots & \vdots   \\
q=1 &
\vdots  &
%\ar[drr]
\vdots &
\vdots & \vdots  \\
q= 0 &\Hom(L_{1},L_{2}) &
0 &  \vdots
 & 0
  \\
q=-1 &    &
\text{ker}(d_{1}^{1,-1})  &
\text{coker}(d_{1}^{1,-1}) &0    \\
q=-2 &
&   &
\Hom(L_{2},L_{1}) & 0  \\
q=-3 & & & &0  \\
& p= 0 & p= 1 & p=2 & p=3
}
\]

 The $E_{3}$ term  looks like
\def\csg#1{\save[].[dddddrrr]!C*+<4pc,0pc>[F-,]\frm{}\restore}
\[
\xymatrix@=.7pc{
q=2 & \csg1
\vdots &
\vdots &
\vdots & \vdots   \\
q=1 &
\vdots  &
%\ar[drr]
\vdots &
\vdots & \vdots  \\
q= 0 & \text{ker}(d_{2}^{0,0})  &
0 &  \vdots
 & 0
  \\
q=-1 &    &
\text{ker}(d_{1}^{1,-1})  &
\text{coker}(d_{1}^{1,-1}) /\text{im}(d_{2}^{0,0}) &0    \\
q=-2 &
&   &
\Hom(L_{2},L_{1}) & 0  \\
q=-3 & & & &0  \\
& p= 0 & p= 1 & p=2 & p=3
}
\]
   The first differential we consider is
\[H^{0}(X,\mathcal{O})^{\oplus 2}=\End(L_{1}) \oplus \End(L_{2}) \stackrel{d_{1}^{1,-1}}{\to}  \Ext^{1}(L_{2},L_{1}).
\]
It is the connecting map for the cohomology of the short exact sequence
\[0 \to \mathcal{H}om(L_{2},L_{1}) \to \mathcal{H}om(L_{2},F)+\mathcal{H}om(E,L_{1})   \to \mathcal{E}nd(L_{1}) \oplus  \mathcal{E}nd(L_{2}) \to 0
\]

Consider the induced filtration on $\Hom(E,F)$ given by
\[0 \subset \Hom(L_{2},L_{1}) \subset \Hom(E,L_{1}) +\Hom(L_{2},F) \subset \Hom(E,F).
\]
One has
\[\frac{\Hom (E,F)}{\Hom(E,L_{1}) +\Hom(L_{2},F)} \cong \text{ker}(d_{2}^{0,0}) \subset\Hom(L_{1},L_{2}),
\]
and
\[\frac{\Hom(E,L_{1}) +\Hom(L_{2},F)}{\Hom(L_{2},L_{1}) } \cong \text{ker}(d_{1}^{1,-1}) \subset H^{0}(X,\mathcal{O})^{\oplus 2}.
\]

For any choices of splittings
\[\Hom (E,F) \stackrel{\psi}{\leftarrow} \text{ker}(d_{2}^{0,0}) \subset \Hom(L_{1},L_{2})
\]
and
\[ \Hom(E,L_{1}) +\Hom(L_{2},F) \stackrel{\phi}{\leftarrow} \text{ker}(d_{1}^{1,-1}) \subset H^{0}(X,\mathcal{O})^{\oplus 2}
\]
we get a decomposition
\begin{equation}\label{ssdecomp}
\Hom (E,F) =\Hom(L_{2},L_{1}) \oplus \phi(\text{ker}(d_{1}^{1,-1})) \oplus \psi(\text{ker}(d_{2}^{0,0})).
\end{equation}
We record formulas for $d_{1}^{1,-1}$ and $d_{2}^{0,0}$ in the case that $X = Z_{k}^{(n)} \times T$ for some affine scheme $T$, $L_{1}=\mathcal{O}(-j)$, $L_{2}=\mathcal{O}(j)$, $E=E_{p}$, $F=E_{p'}$.  
$$d_1^{1,-1}:H^{0}(X,(L_{1} \otimes L_{1}^{\vee}) \oplus (L_{2} \otimes L_{2}^{\vee}))
% \stackrel{d_{1}^{1,-1}}{\to}
\rightarrow
\Ext^{1}(L_{2},L_{1})$$
We compute
\[\left(\begin{matrix} z^{j} & p'  \cr 0 & z^{-j} \cr\end{matrix}\right) \left(\begin{matrix} \ua & 0  \cr 0 & \ud \cr\end{matrix}\right)  - \left(\begin{matrix} \ua & 0  \cr 0 & \ud \cr\end{matrix}\right)\left(\begin{matrix} z^{j} & p  \cr 0 & z^{-j} \cr\end{matrix}\right)= \left(\begin{matrix}0 & \ud p'  - \ua p \cr 0 & 0 \cr\end{matrix}\right) .
\]
Therefore the element of $\Ext^{1}(L_{2},L_{1})$ to which the pair $(\ua,\ud)$ maps is represented by $(\ud p'  - \ua p)|_{(U^{(n)} \cap V^{(n)}) \times T}$.The differential
\[d_{1}^{1,-1}: H^{0}(X, \mathcal{O}^{\oplus 2}) \to \Ext^{1}(\mathcal{O}(j),\mathcal{O}(-j))
\]
\[(\ua, \ud) \mapsto \ud p' - \ua p.
\]
In order to write down the next differential
\[d_{2}^{0,0}:\Hom(\mathcal{O}(-j),\mathcal{O}(j)) \to \Ext^{1}(\mathcal{O}(j),\mathcal{O}(-j))/\text{image}(d_{1}^{1,-1})
\text{,}\]
 we  choose regular functions $\alpha_{U}, \delta_{U}$ on $U$ and  $\alpha_{V}, \delta_{V}$ on $V$ such that

\[-z^{-j}p'\uc_{U} = \alpha_{U}- \alpha_{V}
\]
\[z^{j}p\uc_{U}= \delta_{U}-\delta_{V}
\]
so

\[d_{2}^{0,0}(\uc ) = \delta_{U} p'-\alpha_{V}p.
\]

\vspace{3mm}

\end{document}